\documentclass[11pt,headings=small,BCOR=15mm]{scrartcl}

\usepackage[sumlimits]{amsmath}
\usepackage{amssymb,amsfonts,amsthm}
\usepackage[english]{babel}
\usepackage[latin1]{inputenc}
\usepackage[T1]{fontenc}
\usepackage{lmodern}
\usepackage{graphicx}
\usepackage{scrpage2}
\usepackage{hyperref}
\usepackage{setspace}
\usepackage{textcomp}
\usepackage{enumerate}
\usepackage{xfrac}
\usepackage{helvet}
\usepackage{bbm}

\newtheorem{thm}{Theorem}[section]
\newtheorem{prp}[thm]{Proposition}
\newtheorem{lem}[thm]{Lemma}

\newtheorem*{rem}{Remark}

\def\R{\mathbb{R}}
\def\N{\mathbb{N}}
\def\Q{I}

\def\D{\mathbb{D}}
\def\B{\mathbb{B}}
\def\1{\mathbbm{1}}
\def\Rn{\mathcal{R}_n}

\newcommand\dint{{\rm d}}

\DeclareMathOperator{\Err}{Err}
\DeclareMathOperator{\supp}{supp}

\pagestyle{empty}
\allowdisplaybreaks[1]
\clearscrheadfoot
\ihead{\headmark}
\ohead{\thepage}

\begin{document}
\pagestyle{scrheadings}
\onehalfspacing

\newlength{\fixboxwidth}
\setlength{\fixboxwidth}{\marginparwidth}
\addtolength{\fixboxwidth}{-7pt}
\newcommand{\fix}[1]{\marginpar{\fbox{\parbox{\fixboxwidth}{\raggedright\tiny #1}}}}

\title{Discrepancy of generalized Hammersley type point sets in Besov spaces with dominating mixed smoothness}
\author{Lev Markhasin\thanks{Research of the author was supported by a scholarship of Ernst Ludwig Ehrlich Studienwerk.}\\
        \tiny{Friedrich-Schiller-Universit\"at Jena, e-mail: lev.markhasin@uni-jena.de}}
\date{\today}
\maketitle

\begin{abstract}
The symmetrized Hammersley point set is known to achieve the best possible rate for the $L_2$-norm of the discrepancy function. Also lower bounds for the norm in Besov spaces with dominating mixed smoothness are known. In this paper a large class of point sets which are generalizations of the Hammersley type point sets are proved to asymptotically achieve the known lower bound of the Besov norm. The proof uses a $b$-adic generalization of the Haar system. This result can be regarded as a preparation for the proof in arbitrary dimension.
\end{abstract}

\noindent{\footnotesize {\it 2010 Mathematics Subject Classification.} Primary 11K06,11K38,42C10,46E35,65C05.\\
{\it Key words and phrases.} discrepancy, Hammersley point set, dominating mixed smoothness, quasi-Monte Carlo, Haar system, numerical integration.}\\[5mm]
\textit{Acknowledgement:} The author wants to thank Aicke Hinrichs and Hans Triebel for useful discussions and an anonymous referee for suggestions to improve the presentation.

\section{Introduction}
Let $N$ be some positive integer and $\mathcal{P}$ a point set in the unit cube $\Q^d = [0,1)^d$ with $N$ points. Then the discrepancy function $D_{\mathcal{P}}$ is defined as
\begin{align}
D_{\mathcal{P}}(x) = \frac{1}{N} \sum_{z \in \mathcal{P}} \1_{C_z}(x) - |B_x|.
\end{align}
By $|B_x| = x_1 \cdot \ldots \cdot x_d$ we denote the volume of the rectangular box $B_x = [0,x_1) \times \ldots \times [0,x_d)$ where $x = (x_1,\ldots,x_d) \in \Q^d$ while $\1_{C_z}$ is the characteristic function of the rectangular box $C_z = (z_1,1) \times \ldots \times (z_d,1)$ for $z \in \mathcal{P}$.

Usually one is interested in calculating the norm of the discrepancy function in some normed space of functions on $\Q^d$ which contain the discrepancy function. A very well known result refers to the space $L_2(\Q^d)$. It was proved by Roth in \cite{R54}. There exists a constant $c_1 > 0$ such that for any $N \geq 1$ the discrepancy function of any point set $\mathcal{P}$ in $\Q^d$ with $N$ points satisfies
\[ \left\| D_{\mathcal{P}} | L_2 \right\| \geq c_1 \, \frac{(\log N)^\frac{d-1}{2}}{N}. \]

The currently best known values for the constant $c_1$ can be found in \cite{HM11}. Furthermore, there exists a constant $c_2 > 0$ such that for any $N \geq 1$, there exists a point set $\mathcal{P}$ in $\Q^d$ with $N$ points that satisfies
\[ \left\| D_{\mathcal{P}} | L_2 \right\| \leq c_2 \, \frac{(\log N)^\frac{d-1}{2}}{N}. \]
This result is known for dimension $2$ from \cite{D56} (Davenport), for dimension $3$ from \cite{R79} (Roth) and for arbitrary dimension from \cite{R80} (Roth). Only Davenport's result has been proved by an explicit construction while for higher dimensions probabilistic methods were used until Chen and Skriganov found explicit constructions for arbitrary dimension in \cite{CS02}. Results for the constant $c_2$ can be found in \cite{FPPS10}.

Both bounds were extended to $L_p$-spaces for any $1 < p < \infty$. In the case of the lower bound the reference is \cite{S77} (Schmidt) while for the upper bound it is \cite{C80} (Chen).

As general references for studies of the discrepancy function we refer to the recent monographs \cite{DP10} and \cite{NW10} as well as \cite{M99}, \cite{KN74} and \cite{B11}.

Until recently other norms than $L_p$-norms weren't studied a lot in the context of discrepancy. Triebel started the study of the discrepancy function in other function spaces like Sobolev, Besov and Triebel-Lizorkin spaces in \cite{T10b} and \cite{T10a}. In \cite{H10} Hinrichs proved sharp upper bounds for the norms in Besov spaces with dominating mixed smoothness. Triebel's result was that for all $1 \leq p,q \leq \infty$ and $r \in \R$ satisfying $\frac{1}{p} - 1 < r < \frac{1}{p}$ and $q < \infty$ if $p = 1$ and $q > 1$ if $p = \infty$ there exist constants $c_1, c_2 > 0$ such that, for any $N \geq 2$, the discrepancy function of any point set $\mathcal{P}$ in $\Q^d$ with $N$ points satisfies
\begin{align} \label{triebel_res}
\left\| D_{\mathcal{P}} | S_{pq}^r B(\Q^d) \right\| \geq c_1 \, N^{r-1} (\log N)^{\frac{d-1}{q}},
\end{align}
and, for any $N \geq 2$, there exists a point set  $\mathcal{P}$ in $\Q^d$ with $N$ points such that
\[ \left\| D_{\mathcal{P}} | S_{pq}^r B(\Q^d) \right\| \leq c_2 \, N^{r-1} (\log N)^{(d-1)(\frac{1}{q} + 1 - r)}. \]
Hinrichs' result closed this gap in the case $d = 2$, we will mention it later.

This note will closely orient itself on \cite{H10} in terms of structure and methods of proofs. We mention some definitions from \cite{T10a} which are most important for our purpose.

Let $\mathcal{S}(\R^d)$ denote the Schwartz space and $\mathcal{S}'(\R^d)$ the space of tempered distributions on $\R^d$. For $f \in \mathcal{S}'(\R^d)$, we denote by $\mathcal{F}f$ the Fourier transform of $f$. Let $\varphi_0 \in \mathcal{S}(\R^d)$ satisfy $\varphi_0(t) = 1$ for $|t| \leq 1$ and $\varphi_0(t) = 0$ for $|t| > \frac{3}{2}$. Let
\[ \varphi_k(t) = \varphi_0(2^{-k} t) - \varphi_0(2^{-k + 1} t) \]
where $k \in \N, \,  t \in \R$ and
\[ \varphi_k(t) = \varphi_{k_1}(t_1) \ldots \varphi_{k_d}(t_d) \]
where $k = (k_1,\ldots,k_d) \in \N_0^d, \, t = (t_1,\ldots,t_d) \in \R^d$.
The functions $\varphi_k$ are a dyadic resolution of unity since
\[ \sum_{k \in \N_0^d} \varphi_k(x) = 1 \]
for all $x \in \R^d$. The functions $\mathcal{F}^{-1}(\varphi_k \mathcal{F} f)$ are entire analytic functions for any $f \in \mathcal{S}'(\R^d)$.

Let $0 < p,q \leq \infty$ and $r \in \R$. The Besov space with dominating mixed smoothness $S_{pq}^r B(\R^d)$ consists of all $f \in \mathcal{S}'(\R^d)$ with finite quasi-norm
\[ \left\| f | S_{pq}^r B(\R^d) \right\| = \left( \sum_{k \in \N_0^d} 2^{r (k_1 + \ldots + k_d) q} \left\| \mathcal{F}^{-1}(\varphi_k \mathcal{F} f) | L_p(\R^d) \right\|^q \right)^{\frac{1}{q}} \]
with the usual modification if $q = \infty$.

Let $\mathcal{D}(\Q^d)$ consist of all complex-valued infinitely differentiable functions on $\R^d$ with compact support in the interior of $\Q^d$ and let $\mathcal{D}'(\Q^d)$ be its dual space of all distributions in $\Q^d$. The Besov space with dominating mixed smoothness $S_{pq}^r B(\Q^d)$ consists of all $f \in \mathcal{D}'(\Q^d)$ with finite quasi-norm
\[ \left\| f | S_{pq}^r B(\Q^d) \right\| = \inf \left\{ \left\| g | S_{pq}^r B(\R^d) \right\| : \: g \in S_{pq}^r B(\R^d), \: g|_{\Q^d} = f \right\}. \]
The spaces $S_{pq}^r B(\R^d)$ and $S_{pq}^r B(\Q^d)$ are quasi-Banach spaces.

In \cite{H10} Hinrichs analyzed the norm of the discrepancy function of point sets of the Hammersley type in Besov spaces with dominating mixed smoothness. He proved upper bounds which are special cases of our results in this note. The result from \cite{H10} is that for $r \geq 0$ there is a constant $c > 0$ such that for any $N \geq 2$, there exists a point set $\mathcal{P}$ in $\Q^2$ with $N$ points such that
\[ \left\| D_{\mathcal{P}} | S_{pq}^r B(\Q^2) \right\| \leq c \, N^{r-1} (\log N)^{\frac{1}{q}}. \]
This result closed the gap of Triebel's results in dimension $2$. In this note we prove the same bound for a larger class of point sets. Hinrichs used point sets of Hammersley type. We use generalizations of these point sets.

For any integer $b \geq 2$ and any $n \in \N$ we consider the following mappings
\[ s_1, \ldots, s_n: \, \{0,1,\ldots,b-1\} \longrightarrow \{0,1,\ldots,b-1\} \]
which are either defined as $s_i(t) = t$ for all $t \in \{0,1,\ldots,b-1\}$ or as $s_i(t) = b - 1 - t$ for all $t \in \{0,1,\ldots,b-1\}$ for any $1 \leq i \leq n$. Then we consider point sets
\begin{multline*}
\Rn = \Big\{ \Big( \frac{t_n}{b} + \frac{t_{n-1}}{b^2} + \ldots + \frac{t_1}{b^n},\frac{s_1(t_1)}{b} + \frac{s_{2}(t_2)}{b^2} + \ldots + \frac{s_n(t_n)}{b^n} \Big): \\
t_1,\ldots,t_n\in\{0,1,\ldots,b-1\} \Big\}.
\end{multline*}
So, the set $\Rn$ contains exactly $b^n$ points. These sets are called generalized Hammersley type point sets since they generalize original Hammersley type point sets proposed by Hammersley in \cite{H60}. They were first defined by Faure in \cite{F81} and used in \cite{FP09} and \cite{FPPS10} to calculate their $L_2$-discrepancy. We abbreviate $s_i = s_i(t_i)$ for all $i$.

The explicit constructions for the $L_2$-discrepancy by Chen and Skriganov from \cite{CS02} use $b$-adic constructions, similar to the $b$-adic generalizations of the Hammersley type point sets for $d \geq 2$. One might conjecture that these constructions could be optimal for the norms in Besov spaces with dominating mixed smoothness for arbitrary dimension. Considering this aspect, one could see the current paper as the preparation for the proof of this conjecture.

For any point set $\Rn$ we denote $a_n = \#\{i = 1,\ldots,n:\,s_i = t_i\}$. The main result of this note is

\begin{thm} \label{thm_hammersley_disc}

Let $1 \leq p,q \leq \infty$ and $0 \leq r < \frac{1}{p}$. Then for any integer $b\geq 2$ there is a constant $c > 0$ such that for any $n \in \N$ and any generalized Hammersley type point set $\Rn$ with $a_n$ satisfying $|2a_n - n| \leq c_0$ for some constant $c_0 > 0$, we have
\[ \left\| D_{\Rn} | S_{pq}^r B(\Q^2) \right\| \leq c \, b^{n(r-1)} \, n^{\frac{1}{q}}. \]

\end{thm}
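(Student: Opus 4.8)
The plan is to exploit the characterization of Besov spaces with dominating mixed smoothness via the $b$-adic Haar basis. First I would set up the $b$-adic Haar system on $[0,1)$: for each level $j \geq 0$, each interval $I_{j,m} = [mb^{-j}, (m+1)b^{-j})$, and each "type" index running over a basis of the $(b-1)$-dimensional space of functions on $\{0,\dots,b-1\}$ with mean zero, one has a Haar function $h$ supported on $I_{j,m}$, normalized in $L_\infty$. The two-dimensional system is obtained by taking tensor products $h_{\mathbf{j},\mathbf{m},\ell} = h_{j_1,m_1,\ell_1} \otimes h_{j_2,m_2,\ell_2}$, together with the constant and the "mixed constant" functions corresponding to $j_i = -1$. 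The key fact I would invoke (the $b$-adic analogue of the dyadic Haar characterization used in \cite{H10}) is that, for the admissible range $0 \leq r < 1/p$, the Besov quasi-norm is equivalent to
\[
\left\| f | S_{pq}^r B(\Q^2) \right\| \asymp \Biggl( \sum_{\mathbf{j}} b^{(j_1+j_2)(r + 1 - 1/p)q} \Bigl( \sum_{\mathbf{m},\ell} |\mu_{\mathbf{j},\mathbf{m},\ell}|^p \Bigr)^{q/p} \Biggr)^{1/q},
\]
where $\mu_{\mathbf{j},\mathbf{m},\ell} = \langle f, h_{\mathbf{j},\mathbf{m},\ell}\rangle / |I_{j_1,m_1}||I_{j_2,m_2}|$ are the normalized Haar coefficients (with the obvious modifications for $j_i = -1$, for $p=\infty$ or $q=\infty$, and replacing $j_i$ by $\max(j_i,0)$). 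So the whole problem reduces to estimating the Haar coefficients of $D_{\Rn}$ and summing.

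The core of the argument is the estimate of the individual coefficients $\mu_{\mathbf{j},\mathbf{m},\ell}$ for the discrepancy function of a generalized Hammersley point set. I would split $D_{\Rn} = \frac{1}{b^n}\sum_{z}\1_{C_z} - |B_x|$ and treat the two parts separately. The "linear part" $|B_x| = x_1 x_2$ has Haar coefficients that are nonzero only for small levels and decay like $b^{-j_1-j_2}$ times appropriate factors; that contribution is harmless and yields a convergent sum. For the counting part one uses the crucial structural feature of the $b$-adic Hammersley construction: on a dyadic (here $b$-adic) box $I_{j_1,m_1}\times I_{j_2,m_2}$ with $j_1 + j_2 < n$, the point set is "fair" — it contains exactly $b^{n-j_1-j_2}$ of the $b^n$ points, distributed so that the counting function restricted to that box, when paired against a mean-zero Haar function, either vanishes or is controlled. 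More precisely, one shows $|\mu_{\mathbf{j},\mathbf{m},\ell}| \lesssim b^{j_1+j_2-n}$ when $j_1 + j_2 \leq n$ and $|\mu_{\mathbf{j},\mathbf{m},\ell}| \lesssim b^{-\max(j_1,j_2)}$ roughly when $j_1 + j_2 > n$, with the number of boxes at each level where the coefficient is nonzero being bounded (this is where $|2a_n - n| \leq c_0$ enters: it guarantees that the "diagonal" cancellation in the Hammersley construction is only mildly perturbed, so the number of intervals carrying a nontrivial coefficient at a given pair of levels stays $O(1)$ in the relevant regime rather than growing).

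Finally I would assemble the norm. Plugging the coefficient bounds into the Haar characterization, the sum over $\mathbf{j}$ splits into the region $j_1 + j_2 \leq n$ and its complement. In the first region each term contributes, after the $\ell^p$-sum over the $O(1)$ nonzero boxes, a factor $b^{(j_1+j_2)(r+1-1/p)q} \cdot b^{(j_1+j_2-n)q}$ times $b^{(j_1+j_2)(1/p - 1)q}$ from the number of boxes being $O(b^{j_1+j_2})$... carefully, the net exponent works out so that the geometric sum over the $O(n)$ antidiagonals $j_1+j_2 = \text{const}$ is dominated by the endpoint $j_1 + j_2 \approx n$, giving $b^{n(r-1)}$, while the sum along each antidiagonal of fixed length $j_1+j_2$ contributes a factor which, raised to $1/q$ over the $O(n)$ antidiagonals, produces exactly the $n^{1/q}$ factor. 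The tail region $j_1+j_2 > n$ contributes a further geometric series that is again dominated by $b^{n(r-1)}$ since $r < 1/p \leq 1$. The main obstacle is the coefficient estimate in the "large levels" regime $j_1 + j_2 > n$ combined with the precise book-keeping of how many boxes carry nonzero coefficients — this is exactly where the hypothesis $|2a_n - n| \leq c_0$ must be used, and getting the constant $c$ to depend only on $b$ (and $c_0$) but not on $n$ or the particular choice of the sign maps $s_i$ requires care; everything else is a routine, if somewhat lengthy, summation following the pattern of \cite{H10}.
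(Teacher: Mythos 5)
Your overall strategy (the $b$-adic Haar characterization of $S_{pq}^r B(\Q^2)$, coefficient estimates for $D_{\Rn}$, then summation over levels) is the same as the paper's, but your two central quantitative claims are wrong, and they sit exactly where the real work happens. In the regime $j_1+j_2\le n$ your bound $|\mu_{jm\ell}|\lesssim b^{j_1+j_2-n}$, together with the claim that only $O(1)$ boxes per level carry nonzero coefficients, is both false and insufficient. For $j_1+j_2<n-1$ \emph{every} $b$-adic box $I_{jm}$ contains $b^{n-j_1-j_2-2}$ points of $\Rn$ and essentially every coefficient is nonzero; the proof works not because few coefficients survive but because each one is tiny: the exact computation (Lemma~\ref{lem_coeff_calc} combined with Lemmas~\ref{lem_haar_coeff_besov_x} and \ref{lem_haar_coeff_besov_indicator}) shows that the leading term of the counting part cancels the Haar coefficient of $x_1x_2$ exactly, leaving $|\mu_{jm\ell}| = b^{-2n}\big/\big(|e^{2\pi i\ell_1/b}-1|\,|e^{2\pi i\ell_2/b}-1|\big)$. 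With your weaker bound the antidiagonal $j_1+j_2=\lambda$ contributes about $b^{\lambda(r+1)-n}$ (or worse, depending on which normalization you intend; note your weight $b^{(j_1+j_2)(r+1-1/p)q}$ is the one that goes with the plain inner products $\int f h_{jm\ell}$, not with coefficients divided by the box volume), and summing over $\lambda\le n$ gives $b^{nr}$ or $b^{n(r+1)}$ rather than $b^{n(r-1)}n^{1/q}$. So the heart of the argument, the cancellation down to $b^{-2n}$, is missing from your proposal.

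The second gap is that the hypothesis $|2a_n-n|\le c_0$ does not enter through counting nonzero boxes at intermediate or large levels. It enters through a single coefficient, $\mu_{(-1,-1),(0,0),(1,1)}$, i.e.\ essentially $\int_{\Q^2}D_{\Rn}$: Proposition~\ref{prp_minus1} computes it as $\tfrac14 b^{-2n}+\tfrac12 b^{-n}+(2a_n-n)\tfrac{b^2-1}{12}b^{-n-1}$, and boundedness of $2a_n-n$ is precisely what keeps this term $O(b^{-n})$. For an unbalanced set (e.g.\ $a_n=n$, the plain Hammersley set) it is of order $n\,b^{-n}$ and the asserted bound fails, for the same reason the unsymmetrized Hammersley set has $L_2$-discrepancy of order $(\log N)/N$. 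Your description of the regime $j_1+j_2\ge n$ is also not quite right (there one has at most $b^n$ boxes containing a point, each with $|\mu_{jm\ell}|\le c\,b^{-n-j_1-j_2}$, while all remaining boxes carry the pure volume-term coefficient of size $b^{-2j_1-2j_2-2}$, not $b^{-\max(j_1,j_2)}$ on $O(1)$ boxes), but that part is repairable; the missing cancellation at small levels and the misattributed role of $|2a_n-n|\le c_0$ are genuine gaps.
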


\begin{rem}

The constant $c_0$ is independent of $n$, securing that $|2a_n - n|$ can be estimated with the same constant for any $n$ and any possible $\Rn$. In \cite{H10} only point sets with $a_n = \left\lfloor  \frac{n}{2} \right\rfloor$ were used (with $b = 2$). So a possible value for $c_0$ in that case would be $1$.

\end{rem}

In order to prove the result we will calculate $b$-adic Haar coefficients of the discrepancy function.

The distribution of points in a cube is not just a theoretical concept. Its application in quasi-Monte Carlo methods is very important. Quadrature formulas need very well distributed point sets. The connection of discrepancy and the error of quadrature formulas can be given for a lot of norms. In \cite[Theorem 6.11]{T10a} Triebel gave this connection for Besov spaces with dominating mixed smoothness. We define the error of the quadrature formulas in some Banach space $M(\Q^d)$ of functions on $\Q^d$ with $N$ points as
\[ \Err_N(M(\Q^d)) = \inf_{\{ x_1, \ldots, x_N \} \subset \Q^d} \sup_{f \in M^1_0(\Q^d)} \left| \int_{\Q^d} f(x) \dint x - \frac{1}{N}\sum_{k = 1}^N f(x_k) \right| \]
where by $M^1_0(\Q^d)$ we mean the subset of the unit ball of $M(\Q^d)$ with the property that for all $f \in M^1_0(\Q^d)$ its extension to $\overline{\Q^d}$ vanishes whenever one of the coordinates of the argument is $1$.

\begin{thm}

Let $1 \leq p,q \leq \infty$ and $\frac{1}{p} < r \leq 1$. Then for any integer $b\geq 2$ there are constants $c_1, c_2 > 0$ such that, for any $n \in \N$ and any generalized Hammersley type point set $\Rn$ with $a_n$ satisfying $|2a_n - n| \leq c_0$ for some constant $c_0 > 0$, we have
\item \[ c_1 \, \frac{(\log N)^{\frac{(q - 1)(d-1)}{q}}}{N^r} \leq \Err_N(S_{pq}^r B(\Q^d)) \leq c_2 \, \frac{(\log N)^{\frac{(q - 1)(d-1)}{q}}}{N^r}, \]

\end{thm}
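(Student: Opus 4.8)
The plan is to derive this statement directly from Theorem~\ref{thm_hammersley_disc} together with the quadrature/discrepancy transference principle of \cite[Theorem 6.11]{T10a} and the matching lower bound contained in Triebel's result~\eqref{triebel_res}. First I would recall that Triebel's transference identity asserts, for the relevant range of parameters, an equivalence of the form $\Err_N(S_{pq}^r B(\Q^d)) \asymp \inf_{\mathcal{P}} \| D_{\mathcal{P}} | S_{pq}^{1-r'} B(\Q^d)\|$ or, in the more convenient dual formulation used there, that the quadrature error for the space of smoothness $r$ is governed, up to constants depending only on $p,q,r,d$, by the discrepancy function measured in a space of smoothness $1-r$ (note $0 \le 1-r < 1/p'$ is exactly the admissible range once $1/p < r \le 1$). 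The upper bound then follows by plugging any admissible generalized Hammersley point set $\Rn$ (which exists for every $N = b^n$, and one passes to general $N$ by the standard monotonicity-in-$N$ argument, adding at most $b$ extra points) into Theorem~\ref{thm_hammersley_disc}: this yields $\| D_{\Rn} | S_{pq}^{1-r} B(\Q^2)\| \le c\, b^{n(-r)} n^{1/q} \asymp N^{-r}(\log N)^{1/q}$. The apparent mismatch between the exponent $1/q$ here and $(q-1)(d-1)/q$ in the claim is resolved by observing that $d = 2$ forces $(q-1)(d-1)/q = (q-1)/q = 1 - 1/q$, so one must be careful that the transference principle trades the $(\log N)^{1/q}$ of the $B$-norm for a $(\log N)^{1-1/q}$ in the error; this is precisely the duality between $\ell_q$ and $\ell_{q'}$ entering Triebel's proof, and I would present that bookkeeping explicitly.

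For the lower bound I would invoke~\eqref{triebel_res}: every point set $\mathcal{P}$ with $N$ points satisfies $\| D_{\mathcal{P}} | S_{pq}^{1-r} B(\Q^2)\| \ge c_1 N^{-r}(\log N)^{1/q}$, and feeding this into the lower half of the transference estimate (again converting $(\log N)^{1/q}$ into $(\log N)^{1-1/q}$ via the $q \mapsto q'$ switch) gives $\Err_N(S_{pq}^r B(\Q^2)) \ge c_1\, N^{-r}(\log N)^{(q-1)/q}$. Combining the two halves yields the asserted two-sided bound for $d = 2$; I would then remark that, although the theorem is stated for general $d$, the construction-dependent upper bound is only available for $d = 2$ in this paper (the higher-dimensional case being the announced conjecture), whereas the lower bound is valid in all dimensions, and I would phrase the final statement accordingly or restrict to $d=2$.

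The main obstacle I anticipate is not the transference machinery itself — which is quoted as a black box from \cite{T10a} — but rather keeping the regularity indices and the logarithmic exponents straight under duality: one must verify that $r \in (1/p, 1]$ on the quadrature side corresponds to a genuinely admissible $1-r \in [0, 1/p')$ on the discrepancy side (so that Theorem~\ref{thm_hammersley_disc} actually applies with $p$ replaced by $p'$ and $q$ by $q'$), and that the exponent of $\log N$ flips from $1/q'$ to $1 - 1/q' = 1/q$ and then, via the theorem's output $n^{1/q'}$, lands on $(\log N)^{1/q'} = (\log N)^{(q-1)/q}$ — here I would double-check whether the clean match requires $q' $ or $q$ in the exponent and state the hypotheses so that the arithmetic $(d-1)(1 - 1/q) = (q-1)/q$ at $d=2$ is exactly what comes out. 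Once those index computations are pinned down, the proof is a two-line assembly.
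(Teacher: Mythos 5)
Your proposal is correct and is exactly the paper's (one-line) proof: combine the transference principle of \cite[Theorem 6.11]{T10a} with the lower bound \eqref{triebel_res} and the upper bound of Theorem \ref{thm_hammersley_disc}, both applied in the dual setting $p',q'$ with smoothness $1-r$ (admissible because $\frac{1}{p}<r\leq 1$ gives $0\leq 1-r<\frac{1}{p'}$). The bookkeeping you settle on at the end is the right one: the transference is to the discrepancy measured in $S_{p'q'}^{1-r}B$, so the factor $n^{1/q'}$ from Theorem \ref{thm_hammersley_disc} produces $(\log N)^{(q-1)/q}$, which for $d=2$ is precisely $(\log N)^{\frac{(q-1)(d-1)}{q}}$.
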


\begin{proof}
This follows from \eqref{triebel_res} and Theorem \ref{thm_hammersley_disc} in combination with \cite[Theorem 6.11]{T10a}.

\end{proof}

\section{The $b$-adic Haar bases}

For some integer $b\geq 2$ a $b$-adic interval of length $b^{-j},\, j\in\N_0$ in $\Q$ is an interval of the form
\[ I_{jm} = I_{jm}^b = \big[ b^{-j} m, b^{-j} (m+1) \big)\]
for $m=0,1,\ldots,b^j-1$. For $j \in \N_0$ we divide $I_{jm}$ into $b$ intervals of length $b^{-j - 1}$, i.e. $I_{jm}^k = I_{jm}^{b,k} = I_{j + 1,bm + k},\, k=0,\ldots,b - 1$. As an additional notation we put $I_{-1,0}^{-1} = I_{-1,0} = [0,1)$. Let $\D_j = \{0,1,\ldots,b^j-1\}$ and $\B_j = \{1,\ldots,b-1\}$ for $j \in \N_0$ and $\D_{-1} = \{0\}$ and $\B_{-1} = \{1\}$. The $b$-adic Haar functions $h_{jm\ell} = h_{jm\ell}^b,$ have support in $I_{jm}$. For any $j \in \N_0,\, m \in \D_j,\, \ell \in \B_j$ and any $k=0,\ldots,b-1$ the value of $h_{jm\ell}$ in $I_{jm}^k$ is $e^{\frac{2\pi i}{b}\ell k}$. We denote the indicator function of $I_{-1,0}$ by $h_{-1,0,1}$. Let $\N_{-1} = \{-1,0,1,2,\ldots\}$. The functions $h_{jm\ell},\, j \in \N_{-1},\,m \in \D_j,\,\ell\in\B_j$ are called $b$-adic Haar system. Normalized in $L_2(\Q)$ we obtain the orthonormal $b$-adic Haar basis of $L_2(\Q)$. The proof of this fact can be found in \cite{RW98}.

For $j = (j_1, \dots, j_d) \in \N_{-1}^d$, $m = (m_1, \ldots, m_d) \in \D_j := \D_{j_1} \times \ldots \times \D_{j_d}$ and $\ell = (\ell_1, \ldots, \ell_d) \in \B_j := \B_{j_1} \times \ldots \times \B_{j_d}$, the Haar function $h_{jm\ell}$ is given as the tensor product $h_{jm\ell}(x) = h_{j_1,m_1,\ell_1}(x_1) \ldots h_{j_d,m_d,\ell_d}(x_d)$ for $x = (x_1, \ldots, x_d) \in \Q^d$. We will call $I_{jm} = I_{j_1,m_1} \times \ldots \times I_{j_d,m_d}$ $b$-adic boxes. For $k = (k_1,\ldots,k_d)$ where $k_i \in \{ 0,\ldots,b - 1 \}$ for $j_i \in \N_0$ and $k_i = -1$ for $j_i = -1$ we put $I_{jm}^k = I_{j_1 m_1}^{k_1} \times \ldots \times I_{j_d m_d}^{k_d}$. The functions $h_{jm\ell},\, j \in \N_{-1}^d,\,m \in \D_j,\,\ell\in\B_j$ are called $d$-dimensional $b$-adic Haar system. Normalized in $L_2(\Q^d)$ we obtain the orthonormal $b$-adic Haar basis of $L_2(\Q^d)$.

For any function $f \in L_2(\Q^d)$ we have by Parseval's equation
\begin{align}
\|f|L_2\|^2 = \sum_{j\in\N_{-1}^d} b^{\max(0,j_1) + \ldots + \max(0,j_d)} \sum_{m\in \D_j, \ell\in\B_j} |\mu_{jm\ell}|^2.
\end{align}
where
\begin{align}
\mu_{jm\ell} = \mu_{jm\ell}(f) = \int_{\Q^d} f(x) h_{jm\ell}(x) \, \dint x
\end{align}
are the $b$-adic Haar coefficients of $f$.

Our goal is to combine the $b$-adic Haar basis method with Triebel's theory in Besov spaces. We generalize \cite[Theorem 2.41]{T10a} for $b$-adic Haar systems in the $d$-dimensional unit cube. So we characterize Besov spaces $S_{pq}^rB(\Q^d)$ with dominating mixed smoothness. 

\section{Characterization for Besov spaces with dominating mixed smoothness}

\begin{thm} \label{thm_besov_char}

Let $0 < p,q \leq \infty$ and $\frac{1}{p} - 1 < r < \min(\frac{1}{p},1)$. Let $f \in D'(\Q^d)$. Then $f\in S_{pq}^r B(\Q^d)$ if and only if it can be represented as
\begin{align} \label{f_represented_h}
f = \sum_{j \in \N_{-1}^d} \sum_{m \in \D_j, \, \ell \in \B_j} \mu_{jm\ell} \, b^{\max(0,j_1) + \ldots + \max(0,j_d)} h_{jm\ell}
\end{align}
for some sequence $(\mu_{jm\ell})$ satisfying
\begin{align} \label{eq_quasinorm}
\left( \sum_{j\in\N_{-1}^d} b^{(j_1 + \ldots + j_d)(r - \frac{1}{p} + 1) q} \left( \sum_{m \in \D_j, \, \ell \in \B_j} | \mu_{jm\ell}|^p \right)^{\frac{q}{p}} \right)^{\frac{1}{q}} < \infty,
\end{align}
where the convergence is unconditional in $D'(\Q^d)$ and in any $S_{pq}^\rho B(\Q^d)$ with $\rho<r$. This representation of $f$ is unique with the $b$-adic Haar coefficients
\[ \mu_{jm\ell} = \mu_{jm\ell}^b(f) = \int_{\Q^d}f(x)h_{jm\ell}(x) dx. \]
The expression \eqref{eq_quasinorm} additionally delivers an equivalent quasi-norm on $S_{pq}^r B(\Q^d)$.

\end{thm}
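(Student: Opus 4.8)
\emph{Approach.} The plan is to transfer the proof of \cite[Theorem 2.41]{T10a} from the dyadic grid to the $b$-adic grid. Since both the Haar system $h_{jm\ell}(x)=h_{j_1m_1\ell_1}(x_1)\cdots h_{j_dm_d\ell_d}(x_d)$ and the sequence quasi-norm \eqref{eq_quasinorm} are tensor products, every estimate reduces to a one-dimensional one, and the passage to dominating mixed smoothness is then the standard iterated / mixed-norm Hardy-inequality bookkeeping. Two implications must be shown. First, if $f\in S_{pq}^r B(\Q^d)$ then the $b$-adic Haar coefficients $\mu_{jm\ell}(f)=\int_{\Q^d}f h_{jm\ell}$ are well defined and $(\mu_{jm\ell}(f))$ satisfies \eqref{eq_quasinorm} with control $\lesssim\|f\,|\,S_{pq}^r B(\Q^d)\|$. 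Second, if $(\mu_{jm\ell})$ satisfies \eqref{eq_quasinorm} then the series \eqref{f_represented_h} converges unconditionally in $D'(\Q^d)$ and in every $S_{pq}^\rho B(\Q^d)$ with $\rho<r$, its sum $f$ lies in $S_{pq}^r B(\Q^d)$, and $\|f\,|\,S_{pq}^r B(\Q^d)\|$ is bounded by the left-hand side of \eqref{eq_quasinorm}. Applying to \eqref{f_represented_h} the biorthogonal Haar functional — integration against the appropriate member of the Haar system, using the orthogonality relations that make $\int h_{jm\ell}h_{j'm'\ell'}$ vanish off the diagonal — recovers $\mu_{j'm'\ell'}$; this yields uniqueness and identifies the coefficients, and the quasi-norm equivalence follows by combining the two implications. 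The admissible range $\tfrac1p-1<r<\min(\tfrac1p,1)$ is optimal: it is already forced by testing with a single Haar function, which belongs to $S_{pq}^s B$ exactly for $s<\tfrac1p$ (so $r<\min(\tfrac1p,1)$) and whose associated coefficient functional is bounded on $S_{pq}^r B(\Q^d)$ exactly for $r>\tfrac1p-1$ — the latter being precisely what makes the integral defining $\mu_{jm\ell}(f)$ meaningful even though $h_{jm\ell}$ is neither smooth nor supported in the interior of $\Q^d$.

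\emph{Second implication.} I would deduce this from the atomic decomposition theorem for $S_{pq}^r B(\Q^d)$ in its $b$-adic form; its proof carries over from the dyadic case with $2$ replaced by a fixed $b\ge2$, as only the nesting structure of boxes of side lengths $b^{-j_i}$ is used. Each $h_{jm\ell}$ is supported in the $b$-adic box $I_{jm}$, satisfies $|h_{jm\ell}|\le1$, and has vanishing integral in every coordinate $i$ with $j_i\ge0$, because $\sum_{k=0}^{b-1}e^{2\pi i\ell_i k/b}=0$ for $\ell_i\in\B_{j_i}$. After multiplying by the factor $b^{\max(0,j_1)+\ldots+\max(0,j_d)}$ that appears in \eqref{f_represented_h} together with the rescaling $b^{(j_1+\ldots+j_d)(r-1/p+1)}$, these become admissible non-smooth atoms in the range at hand: the single vanishing moment per non-trivial coordinate meets the moment requirement because $r-\tfrac1p+1>0$, and boundedness alone suffices — no derivatives needed — because $r<\min(\tfrac1p,1)$. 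The atomic decomposition theorem then yields simultaneously the estimate $\|f\,|\,S_{pq}^r B(\Q^d)\|\lesssim$ the norm in \eqref{eq_quasinorm} and the unconditional convergence of \eqref{f_represented_h} in $D'(\Q^d)$; convergence in $S_{pq}^\rho B(\Q^d)$ for $\rho<r$ follows because lowering $r$ to $\rho$ turns the outer $\ell^q$-sum over $j$ into a strictly convergent one, so the tails of \eqref{f_represented_h} tend to zero in that space.

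\emph{First implication — the main obstacle.} This is the Fourier-analytic part. I would pick an extension $g$ of $f$ to $\R^d$ with $\|g\,|\,S_{pq}^r B(\R^d)\|$ comparable to $\|f\,|\,S_{pq}^r B(\Q^d)\|$, decompose $g=\sum_{k\in\N_0^d}g_k$ with $g_k=\mathcal{F}^{-1}(\varphi_k\mathcal{F}g)$, and note $\mu_{jm\ell}(f)=\sum_k\int_{\Q^d}g_k h_{jm\ell}$ — legitimate because $\sum_k g_k$ converges in a Besov space slightly coarser than $S_{pq}^r B$, on which $h_{jm\ell}$ still acts as a bounded functional thanks to $r>\tfrac1p-1$. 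Iterated integration reduces the task to one-dimensional integrals $\int_{I_{j_im_i}}(\cdot)\,h_{j_im_i\ell_i}$ in which the integrand inherits frequency localization at dyadic level $2^{k_i}$, and for these one establishes two-sided exponential off-diagonal decay as $2^{k_i}$ runs past the spatial scale $b^{j_i}$: when $2^{k_i}\le b^{j_i}$, the integrand varies slowly over the interval $I_{j_im_i}$ of length $b^{-j_i}$, so a first-order Taylor expansion together with the vanishing mean of $h_{j_im_i\ell_i}$ gains a factor comparable to $2^{k_i}b^{-j_i}$ (here $r<1$ is used); when $2^{k_i}>b^{j_i}$, one uses only $|h_{j_im_i\ell_i}|\le1$, Hölder on $I_{j_im_i}$ and a Peetre maximal-function bound, and after summation the loss is a power of $2^{k_i}b^{-j_i}$ whose exponent is negative precisely because $r<\tfrac1p$. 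Inserting the weights $b^{(j_1+\ldots+j_d)(r-1/p+1)}$, taking the $\ell^p$- and $\ell^q$-norms, and performing the summation over $k$ by a discrete mixed-norm Hardy-type inequality in $\ell^q(\ell^p)$ — which also absorbs the mismatch between the dyadic frequency weights $2^{rk}$ and the $b$-adic resolution weights $b^{(r-1/p+1)j}$ — reproduces \eqref{eq_quasinorm}. The crux, in which essentially all the work lies, is to establish these two-sided estimates for $\mu_{jm\ell}(g_k)$ with constants uniform in $b$ and then to push them through the mixed-norm Hardy inequality; the remainder is a transcription of Triebel's dyadic argument with $2$ replaced by $b$.
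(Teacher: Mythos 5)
Your route is genuinely different from the paper's, so let me first record the difference and then the problem. The paper first changes base in the Fourier-analytic definition (proving the equivalence of the $b$-adic and dyadic quasi-norms via the multiplier estimate \cite[Proposition 2.3.3]{Hn10}) and then transcribes Triebel's wavelet argument: the $b$-adic analogues of \cite[Propositions 2.34, 2.37, 2.38]{T10a}, with the characteristic functions $\chi_{jm}$ of $b$-adic boxes expanded in $b$-adic Daubechies wavelets, and with the remaining parameter cases obtained by duality and real interpolation. You avoid the base change altogether and replace the wavelet step by a non-smooth atomic decomposition (synthesis) plus direct estimates of $\int g_k h_{jm\ell}$ against dyadic frequency blocks summed by a Hardy-type inequality (analysis). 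The synthesis half is acceptable in substance: your ``bounded atoms with one vanishing moment per active coordinate, admissible since $r-\frac1p+1>0$ and $r<\min(\frac1p,1)$'' is essentially the same statement as the paper's analogue of \cite[Proposition 2.34]{T10a}, only packaged differently; but note that a non-smooth atomic decomposition for dominating mixed smoothness is not an off-the-shelf citation and proving it is machinery of the same weight as the wavelet expansion it replaces.

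The genuine gap is in the analysis direction on the high-frequency side. For $2^{k_i}>b^{j_i}$ you use only $|h_{j_im_i\ell_i}|\le1$, H\"older on $I_{j_im_i}$ and a maximal bound. In one dimension, after inserting the weight $b^{j(r-\frac1p+1)}$, this yields a contribution of order $b^{jr}\,\|\mathcal{F}^{-1}(\varphi_k\mathcal{F}g)\,|\,L_p\|$, and summing over $j$ with $b^{j}<2^{k}$ gives the required bound $\lesssim 2^{kr}\|\mathcal{F}^{-1}(\varphi_k\mathcal{F}g)\,|\,L_p\|$ only when $r>0$; for $\frac1p-1<r\le 0$ (part of the claimed range whenever $p>1$) the sum is dominated by the small-$j$ terms and no negative power of $2^{k}b^{-j}$ appears --- the parenthetical ``because $r<\frac1p$'' is not the operative condition there, since $r<\frac1p$ is what the synthesis/membership side needs, while the high-frequency analysis side is governed by $r>\frac1p-1$. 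To cover $r\le0$ directly you must exploit the cancellation of the frequency blocks against the jump structure of the Haar function (e.g.\ write the block as a derivative of a band-limited function with $L_p$-norm smaller by a factor $2^{-k}$, pair it with the finitely many jumps of $h_{jm\ell}$, and use a Plancherel--Polya sampling bound), and this is exactly where $r>\frac1p-1$ enters; alternatively one treats that range separately. The paper does the latter: its direct argument is run only for $\max(\frac1p,1)-1<r<\min(\frac1p,1)$, and the cases $1<p<\infty$, $\frac1p-1<r\le0$ are obtained by duality and real interpolation following Steps 2--3 of the proof of \cite[Proposition 2.38]{T10a}. As written, your proposal has no counterpart for that part of the parameter range. (A minor point: uniformity of the constants in $b$ is neither needed for the theorem nor established by your sketch; constants depending on $b$ suffice.)
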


The definition of the spaces $S_{pq}^r B (\Q^d)$ was dyadic therefore, making it difficult to gain any $b$-adic results. Hence, we have to change the base first.

Let $\varphi_0 \in \mathcal{S}(\R)$ satisfy $\varphi_0(t) = 1$ for $|t| \leq 1$ and $\varphi_0(t) = 0$ for $|t| > \frac{b + 1}{b}$. Let
\[ \varphi_k(t) = \varphi_0(b^{-k} t) - \varphi_0(b^{-k + 1} t) \]
where $t \in \R, \, k \in \N$ and
\[ \varphi_k(t) = \varphi_{k_1}(t_1) \ldots \varphi_{k_d}(t_d) \]
where $k = (k_1,\ldots,k_d) \in \N_0^d, \, t = (t_1,\ldots,t_d) \in \R^d$.
The functions $\varphi_k$ are a $b$-adic resolution of unity since
\[ \sum_{k \in \N_0^d} \varphi_k(x) = 1 \]
for all $x \in \R^d$. The functions $\mathcal{F}^{-1}(\varphi_k \mathcal{F} f)$ are entire analytic functions for any $f \in \mathcal{S}'(\R^d)$.
Let $0 < p,q \leq \infty$ and $r \in \R$. The $b$-adic Besov space with dominating mixed smoothness $S_{pq}^r B^b(\R^d)$ consists of all $f \in \mathcal{S}'(\R^d)$ with finite quasi-norm
\[ \left\| f | S_{pq}^r B^b(\R^d) \right\| = \left( \sum_{k \in \N_0^d} b^{r (k_1 + \ldots + k_d) q} \left\| \mathcal{F}^{-1}(\varphi_k \mathcal{F} f) | L_p(\R^d) \right\|^q \right)^{\frac{1}{q}} \]
with the usual modification if $q = \infty$. We will first prove that the $b$-adic norm is equivalent to the dyadic norm. Then we will be able to apply Triebel's ideas for the proof of the theorem. To prove the equivalence, we prove the equivalence of the $b$-adic and the $(b+1)$-adic norms. Let the functions $\varphi_k$ be a $b$-adic resolution of unity and the functions $\psi_k$ a $(b+1)$-adic resolution of unity. We observe that 
\[ \supp \varphi_k \subset [-b^{k+1}, -b^{k-1}] \cup [b^{k-1}, b^{k+1}] \]
and
\[ \supp \psi_k \subset [-(b+1)^{k+1}, -(b+1)^{k-1}] \cup [(b+1)^{k-1}, (b+1)^{k+1}]. \]
Now we check that for every $j \in \N_0$ there are at most $2$ such $k \in \N_0$ that $[b^{k-1}, b^{k+1}] \subset [(b+1)^{j-1}, (b+1)^{j+1}]$. But this is easy since $(b+1)^{j-1} \leq b^{k-1}$ and $b^{k+1} \leq (b+1)^{j+1}$ is equivalent to
\begin{align} \label{k_j_limited}
(j - 1) \frac{\log(b + 1)}{\log(b)} + 1 \leq k \leq (j + 1) \frac{\log(b + 1)}{\log(b)} - 1.
\end{align}
The fact that the cardinality of the set of such $k$ is at most $2$ follows from
\[ 2 \frac{\log(b + 1)}{\log(b)} - 2 < 2 \]
which is equivalent to
\[ \frac{\log(b + 1)}{\log(b)} < 2 \]
which is equivalent to $0 < b^2 - b - 1$ which is clearly satisfied since $b \geq 2$. Therefore, we know that for every $j$ there are not more than two $k$ such that, $\supp \varphi_k \subset \supp \psi_j$. For every $j \in \N_0$ we denote by $\Lambda(j)$ the set of such $k$ that $\supp \varphi_k \cap \supp \psi_j \neq \emptyset$. The cardinality of such sets is at most $6$ and for sure they are not empty. Conversely, for every $k \in N_0$ there are at most $3$ such $j \in \N_{-1}$ that $\supp \varphi_k \cap \supp \psi_j \neq \emptyset$. We denote by $\Omega(k)$ the set of such $j$. Additionally, we put for $j \in N_0^d$
\[ \Lambda(j) = \Lambda(j_1) \times \ldots \times \Lambda(j_d) \]
and for $k \in \N_{-1}^d$
\[ \Omega(k) = \Omega(k_1) \times \ldots \times \Omega(k_d). \]
Hence, for all $x \in \R^d$ we have
\[ \varphi_k(x) = \varphi_k(x) \sum_{j \in \Omega(k)} \psi_j(x) \]
and
\[ \psi_j(x) = \psi_j(x) \sum_{k \in \Lambda(j)} \varphi_k(x). \]
Now let $j,k \in \N_0^d$ then we have
\begin{align*}
\mathcal{F}^{-1}(\varphi_k \mathcal{F} f) = \sum_{j \in \Omega(k)} \mathcal{F}^{-1} \left( \varphi_k \mathcal{F} \left( \mathcal{F}^{-1} (\psi_{j} \mathcal{F} f) \right) \right)
\end{align*}
and
\begin{align*}
\mathcal{F}^{-1}(\psi_j \mathcal{F} f) = \sum_{k \in \Lambda(j)} \mathcal{F}^{-1} \left( \psi_j \mathcal{F} \left( \mathcal{F}^{-1} (\varphi_{k} \mathcal{F} f) \right) \right).
\end{align*}
Let $l > \frac{1}{\min(1,p)} - \frac{1}{2}$. From Lemma \cite[Proposition 2.3.3]{Hn10} for $M = \varphi_k$ and $\beta_1 = b^{k_1 + 2}, \ldots, \beta_d = b^{k_d + 2}$ we get (with a constant $c > 0$) that
\begin{align*}
& \left\| \mathcal{F}^{-1} \left( \varphi_k \mathcal{F} \left( \mathcal{F}^{-1} (\psi_{j} \mathcal{F} f) \right) \right) | L_p (\R^d) \right\| \\
& \qquad \qquad \leq c \left\| \varphi_k(b^{k_1 + 2} \cdot, \ldots, b^{k_d + 2} \cdot) | S_2^l W(\R^d) \right\| \left\| \mathcal{F}^{-1} (\psi_{j} \mathcal{F} f) | L_p(\R^d) \right\| \\
& \qquad \qquad \leq c_1 \prod_{i=1}^d \left\| \varphi_{k_i}(b^{k_i + 2} \cdot) | W_2^l(\R) \right\| \left\| \mathcal{F}^{-1} (\psi_{j} \mathcal{F} f) | L_p(\R^d) \right\|.
\end{align*}
Since $\varphi_{k_i} \in \mathcal{S}(\R)$ there exists a constant $c_2 > 0$ such that, for all $i$ we have
\[ \left\| \varphi_{k_i}(b^{k_i + 2} \cdot) | W_2^l(\R) \right\| \leq c_2. \]
Consequently, we get
\[ \left\| \mathcal{F}^{-1} \left( \varphi_k \mathcal{F} \left( \mathcal{F}^{-1} (\psi_{j} \mathcal{F} f) \right) \right) | L_p (\R^d) \right\| \leq c_3 \left\| \mathcal{F}^{-1} (\psi_{j} \mathcal{F} f) | L_p(\R^d) \right\| \]
for $j \in \Omega(k)$
and analogously (using \cite[Proposition 2.3.3]{Hn10} with $b_1 = (b + 1)^{j_1 + 2}, \ldots, b_d = (b + 1)^{j_d + 2}$)
\[ \left\| \mathcal{F}^{-1} \left( \psi_j \mathcal{F} \left( \mathcal{F}^{-1} (\varphi_{k} \mathcal{F} f) \right) \right) | L_p (\R^d) \right\| \leq c_4 \left\| \mathcal{F}^{-1} (\varphi_{k} \mathcal{F} f) | L_p(\R^d) \right\| \]
for $k \in \Lambda(j)$. So we have proved for every $k \in \N_0^d$ that
\[ \left\| \mathcal{F}^{-1} \left( \varphi_k \mathcal{F} f \right) | L_p (\R^d) \right\| \leq c \sum_{j \in \Omega(k)} \left\| \mathcal{F}^{-1} (\psi_{j} \mathcal{F} f) | L_p(\R^d) \right\|. \]
Multiplying with $b^{r (k_1 + \ldots + k_d) q}$ and summing over $k$ will give us on the left side $\left\| \cdot | S_{pq}^r B^b(\R^d) \right\|$. On the right side we get at most $3$ identical summands which we can incorporate into the constant. The norming factor can be easily estimated with a constant since the difference of $j$ and $k$ is limited by \eqref{k_j_limited}. Conversely, we have for every $j \in \N_0^d$
\[ \left\| \mathcal{F}^{-1} \left( \psi_j \mathcal{F} f \right) | L_p (\R^d) \right\| \leq c \sum_{k \in \Lambda(j)} \left\| \mathcal{F}^{-1} (\varphi_{k} \mathcal{F} f) | L_p(\R^d) \right\|. \]
Multiplying with $(b+1)^{r (j_1 + \ldots + j_d) q}$ and summing over $j$ will give us on the left side $\left\| \cdot | S_{pq}^r B^{b+1}(\R^d) \right\|$. On the right side we get at most $6$ identical summands which we can incorporate into the constant. The same applies again to the norming factor.

Now we can prove the theorem following closely the original proof from \cite{T10a}. First, one assumes for $\max(\frac{1}{p},1) - 1 < r < \min(\frac{1}{p},1)$ that the function $f$ is given in the form
\begin{align} \label{f_given_with_chi}
f = \sum_{j \in \N_0^d} b^{j_1 + \ldots + j_d} \sum_{m \in \D_j} \mu_{jm} \, \chi_{jm}
\end{align}
where $\chi_{jm},\, j \in \N_{0},m \in \D_j$ are the characteristic functions of the $b$-adic boxes $I_{jm}$ and the sequence $\mu_{jm}$ satisfies
\[ \left( \sum_{j \in \N_0^d} b^{(j_1 + \ldots + j_d)(r - \frac{1}{p} + 1) q} \left( \sum_{m \in \D_j} | \mu_{jm}|^p \right)^{\frac{q}{p}} \right)^{\frac{1}{q}} < \infty. \]
Then analogously to \cite[Proposition 2.34]{T10a} one can prove that $f$ belongs to $S_{pq}^r B^b (\Q^d)$ and therefore to $S_{pq}^r B (\Q^d)$. To prove this let $\psi_M, \psi_F$ be real compactly supported $L_2$-normed $b$-adic Daubechies wavelets on $\R$ analogous to \cite[(1.55--1.56)]{T10a} and according to \cite[Theorem 5.1]{RW98}. We then expand the functions $\chi_{j_1 m_1}(x_1), \ldots, \chi_{j_d m_d}(x_d)$ into the wavelet representation according to \cite[(2.51--2.53)]{T10a} and insert $\chi_{jm}(x) = \chi_{j_1 m_1}(x_1) \cdot \ldots \cdot \chi_{j_d m_d}(x_d)$ into \eqref{f_given_with_chi}. We split the resulting expansions as in \cite[(2.56--2.60)]{T10a}. Then we have $2^d$ terms sorted into the cases $(j_1 \geq k_1, \ldots, j_d \geq k_d), \ldots, (j_1 < k_1, \ldots, j_d < k_d)$. The index $k = (k_1, \ldots, k_d)$ is according to \cite[(2.51)]{T10a}. We get a $b$-adic version of \cite[(2.54)]{T10a} and \cite[(2.55)]{T10a}. This guarantees counterparts of \cite[(2.62--2.66)]{T10a} and \cite[(2.73--2.74)]{T10a}. This observation leads to the norm estimate of the lemma and therefore prooves it. The next step is to estimate
\begin{align} \label{estimate_f}
\left\| f | S_{pq}^r B(\Q^d) \right\| \geq c \left( \sum_{j \in \N_{-1}^d} b^{(j_1 + \ldots + j_d)(r - \frac{1}{p} + 1) q} \left( \sum_{m \in \D_j, \, \ell \in \B_j} | \mu_{jm\ell} (f)|^p \right)^{\frac{q}{p}} \right)^{\frac{1}{q}}
\end{align}
for all $f \in S_{pq}^r B(\Q^d)$ analogously to \cite[Proposition 2.37]{T10a} ($b$-adic) where $\mu_{jm\ell}(f)$ is the sequence of the $b$-adic Haar coefficients. Finally, one gets a counterpart to \cite[Proposition 2.38]{T10a} therefore proving the theorem of this section. To do so, we respresent
\begin{align*}
   h_{jm\ell} & = \sum_{k = 0}^{b - 1} e^{\frac{2\pi i}{b} k \ell} \chi_{j+1,bm+k}, \\
h_{-1,0,1} & = \chi_{0,0}.
\end{align*}
Then every function represented as in \eqref{f_represented_h} can be represented as in \eqref{f_given_with_chi} and therefore belongs to $S_{pq}^r B(\Q^d)$. Conversely, every $f \in S_{pq}^r B(\Q^d)$ gives the estimation \eqref{estimate_f} while the representability \eqref{f_represented_h} follows from the fact that the $b$-adic Haar system is an orthonormal basis in $L_2(\Q^d)$. Therefore, one obtains the equivalence of the norms. All further technicalities can be found in the proof of \cite[Theorem 2.9]{T10a} and the references given there. The unconditionality is clear in view of \eqref{eq_quasinorm} The assertion can be obtained for $1 < p, q \leq \infty$ with $\frac{1}{p} - 1 < r < 0$ as explained in Step 2 of the proof of \cite[Proposition 2.38]{T10a}. It is also explained there how to prove the generalization of the duality. \cite[Theorem 1.20]{T10a} is here helpful as well. The remaining cases with $q < \infty$ can be obtained by real interpolation as explained in Step 3 of the proof of \cite[Proposition 2.38]{T10a} (with higher dimension not changing anything). All other cases $1 < p < \infty, \frac{1}{p} - 1 < r \leq 0, q = \infty$ can be solved by duality as well.

\section{The Haar coefficients of the generalized Hammersley type point sets}

Before we can compute the Haar coefficients we need some short calculations. We omit the proofs since they are nothing further but easy exercises.

\begin{lem} \label{lem_factor5}

For any integer $b\geq 2$ and for any $\ell \in \left\{ 1, \ldots, b - 1 \right\}$ we have
\[ \sum_{k=1}^{b-1} k e^{\frac{2\pi i}{b} \ell k} = \frac{b}{e^{\frac{2 \pi i}{b} \ell} - 1} = \sum_{k=0}^{b-2} \sum_{r = k + 1}^{b-1} e^{\frac{2\pi i}{b} r \ell}. \]

\end{lem}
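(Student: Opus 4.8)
The plan is to abbreviate $\omega := e^{\frac{2\pi i}{b}\ell}$ and to exploit the two elementary facts that $\omega^b = 1$ and that $\omega \neq 1$ (the latter precisely because $1 \le \ell \le b-1$). In particular the finite geometric sum gives $\sum_{k=0}^{b-1}\omega^k = \frac{\omega^b - 1}{\omega - 1} = 0$, hence $\sum_{k=1}^{b-1}\omega^k = -1$; both claimed identities then follow by short manipulations, and the only point requiring (minimal) care is that $\omega - 1 \neq 0$, so that dividing by it is legitimate.

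For the first equality, write $T := \sum_{k=1}^{b-1} k\,\omega^k$. Computing $(\omega-1)T = \omega T - T$, shifting the index $k \mapsto k+1$ in $\omega T$ and cancelling termwise,
\[ (\omega - 1) T = \sum_{j=2}^{b} (j-1)\,\omega^j - \sum_{j=1}^{b-1} j\,\omega^j = (b-1)\,\omega^b - \sum_{j=1}^{b-1}\omega^j = (b-1) + 1 = b, \]
where the last two steps use $\omega^b = 1$ and $\sum_{j=1}^{b-1}\omega^j = -1$. Dividing by $\omega-1$ gives $T = b/(\omega-1)$, the middle expression of the lemma. Equivalently one may differentiate $\sum_{k=0}^{b-1} x^k = (x^b-1)/(x-1)$ with respect to $x$, multiply by $x$, and evaluate at $x = \omega$; since $\omega^b = 1$ the numerator collapses to $b(\omega-1)$ and one reads off the same value.

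For the second equality I would interchange the order of summation: in $\sum_{k=0}^{b-2}\sum_{r=k+1}^{b-1}\omega^r$ a fixed $r \in \{1,\dots,b-1\}$ is counted once for each $k$ with $0 \le k \le r-1$, i.e.\ exactly $r$ times, so the double sum equals $\sum_{r=1}^{b-1} r\,\omega^r = T = b/(\omega-1)$ by the first part. Alternatively, summing the inner geometric series directly yields $\sum_{r=k+1}^{b-1}\omega^r = (1 - \omega^{k+1})/(\omega-1)$ (using $\omega^b = 1$), and summing this over $k = 0,\dots,b-2$ together with $\sum_{j=1}^{b-1}\omega^j = -1$ again produces $b/(\omega-1)$. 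There is no genuine obstacle here --- as already noted in the text it is an easy exercise --- the entire content being the bookkeeping above plus the observation that the restriction $\ell \in \{1,\dots,b-1\}$ is exactly what makes $\omega \neq 1$ and hence the geometric cancellations valid.
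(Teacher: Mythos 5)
Your proof is correct: the computation of $(\omega-1)T$ with $\omega=e^{\frac{2\pi i}{b}\ell}$, using $\omega^b=1$ and $\sum_{j=1}^{b-1}\omega^j=-1$, gives the middle expression, and the interchange of summation (each $r$ counted exactly $r$ times) identifies the double sum with $T$; both steps check out, and you correctly flag that $\ell\in\{1,\dots,b-1\}$ is what guarantees $\omega\neq 1$. The paper omits the proof of this lemma as an easy exercise, and your argument is precisely the standard one that is intended, so there is nothing to compare beyond noting that you have supplied the omitted details correctly.
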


\begin{lem} \label{lem_haar_coeff_besov_x}

Let $f(x) = x_1 x_2$ for $x=(x_1,x_2) \in \Q^2$. Let $j \in \N_{-1}^2,\,m\in\D_j,\ell\in\B_j$ and let $\mu_{jm\ell}$ be the $b$-Haar coefficient of $f$. Then
\begin{enumerate}[(i)]
	\item If $j = (j_1,j_2) \in \N_0^2$ then
	      \[ \mu_{jm\ell} = \frac{b^{-2j_1 - 2j_2 - 2}}{(e^{\frac{2\pi i}{b} \ell_1} - 1)(e^{\frac{2\pi i}{b} \ell_2} - 1)}. \]
	\item If $j = (j_1,-1)$ with $j_1 \in \N_0$ then
	      \[ \mu_{jm\ell} = \frac{1}{2}\frac{b^{-2j_1 - 1}}{e^{\frac{2\pi i}{b} \ell_1} - 1}. \]	
	\item If $j=(-1,j_2)$ with $j_2\in\N_0$ then
	      \[ \mu_{jm\ell} = \frac{1}{2} \frac{b^{-2j_2 - 1}}{e^{\frac{2\pi i}{b} \ell_2} - 1}. \]	
	\item If $j = (-1,-1)$ then $\mu_{jm\ell} = \frac{1}{4}$.	
\end{enumerate}

\end{lem}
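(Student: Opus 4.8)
The plan is to reduce everything to a single one-dimensional integral by exploiting the product structure. Since $f(x) = x_1 x_2$ factorizes and the Haar function factorizes as $h_{jm\ell}(x) = h_{j_1 m_1 \ell_1}(x_1)\,h_{j_2 m_2 \ell_2}(x_2)$, Fubini's theorem gives $\mu_{jm\ell} = \bigl(\int_0^1 x\,h_{j_1 m_1\ell_1}(x)\,\dint x\bigr)\bigl(\int_0^1 x\,h_{j_2 m_2\ell_2}(x)\,\dint x\bigr)$, so it suffices to evaluate $\gamma_{j,\ell} := \int_0^1 x\,h_{jm\ell}(x)\,\dint x$ for $j \in \N_{-1}$, $\ell \in \B_j$, and then assemble the four cases by multiplication.

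First I would handle $j = -1$: here $h_{-1,0,1}$ is the indicator of $[0,1)$, so $\gamma_{-1,1} = \int_0^1 x\,\dint x = \tfrac12$. Next, for $j \in \N_0$ I would use that $h_{jm\ell}$ is supported on $I_{jm}$ and takes the constant value $e^{\frac{2\pi i}{b}\ell k}$ on the child interval $I_{jm}^k = I_{j+1,\,bm+k}$, $k = 0,\dots,b-1$. Splitting the integral over these children and using $\int_{I_{j+1,bm+k}} x\,\dint x = b^{-2j-2}\bigl(bm+k+\tfrac12\bigr)$ gives $\gamma_{j,\ell} = b^{-2j-2}\sum_{k=0}^{b-1} e^{\frac{2\pi i}{b}\ell k}\bigl(bm+k+\tfrac12\bigr)$. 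Because $\ell \in \{1,\dots,b-1\}$ the full character sum $\sum_{k=0}^{b-1} e^{\frac{2\pi i}{b}\ell k}$ vanishes, so the term $bm+\tfrac12$ contributes nothing and only $\sum_{k=1}^{b-1} k\,e^{\frac{2\pi i}{b}\ell k}$ survives; by Lemma \ref{lem_factor5} this equals $b/(e^{\frac{2\pi i}{b}\ell}-1)$, whence $\gamma_{j,\ell} = b^{-2j-1}/(e^{\frac{2\pi i}{b}\ell}-1)$.

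Finally I would read off the statement by multiplying the two one-dimensional factors: case (i) is $\gamma_{j_1,\ell_1}\gamma_{j_2,\ell_2}$ with both levels nonnegative; cases (ii) and (iii) multiply one such factor by the value $\tfrac12$ coming from a $-1$ component (using the symmetry in the two coordinates); and case (iv) is $\tfrac12\cdot\tfrac12 = \tfrac14$. I do not expect a genuine obstacle: the only points needing attention are the cancellation of the full character sum, which relies on $\ell \not\equiv 0 \bmod b$ (guaranteed by $\ell \in \B_j$) and also keeps the denominators $e^{\frac{2\pi i}{b}\ell}-1$ from vanishing, together with the invocation of Lemma \ref{lem_factor5}; the remaining work is the elementary evaluation of $\int x\,\dint x$ over $b$-adic intervals.
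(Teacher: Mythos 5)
Your proof is correct, and it is essentially the intended argument: the paper omits the proof as an easy exercise, and your computation (tensorizing via Fubini, evaluating $\int x\,h_{jm\ell}(x)\,\dint x$ over the children $I_{jm}^k$, killing the constant term by the vanishing character sum, and invoking Lemma \ref{lem_factor5}) is exactly the standard route the surrounding lemmas are set up for. Nothing further is needed.
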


\begin{lem} \label{lem_haar_coeff_besov_indicator}

Let $z = (z_1,z_2) \in \Q^2$ and $f(x) = \1_{C_z}(x)$ for $x = (x_1,x_2) \in \Q^2$. Let $j \in \N_{-1}^2,\,m \in \D_j,\ell \in \B_j$ and let $\mu_{jm\ell}$ be the Haar coefficient of $f$. Then $\mu_{jm\ell} = 0$ whenever $z$ is not contained in the interior of the $b$-adic box $I_{jm}$ supporting the functions $h_{jm\ell}$. If $z$ is contained in the interior of $I_{jm}$ then
\begin{enumerate}[(i)]
	\item If $j = (j_1,j_2) \in \N_0^2$ then there is a $k = (k_1,k_2)$ with $k_1,k_2 \in \{0,1,\ldots,b-1\}$ such that $z$ is contained in $I_{jm}^k$. Then 
	      \begin{multline*}
        \mu_{jm\ell} = b^{-j_1-j_2-2} \left[ (bm_1+k_1+1-b^{j_1+1}z_1) e^{\frac{2\pi i}{b}k_1\ell_1} + \sum_{r_1=k_1+1}^{b-1} e^{\frac{2\pi i}{b}r_1\ell_1} \right] \times\\
        \times \left[ (bm_2+k_2+1 - b^{j_2 + 1} z_2) e^{\frac{2\pi i}{b}k_2\ell_2} + \sum_{r_2 = k_2 + 1}^{b-1} e^{\frac{2\pi i}{b}r_2\ell_2} \right].
        \end{multline*}       
  \item If $j = (j_1,-1)$ with $j_1 \in \N_0$ then there is a $k_1 \in \{0,1,\ldots,b-1\}$ such that $z$ is contained in $I_{jm}^{k_1}$. Then
        \[ \mu_{jm\ell} = b^{-j_1-1} \left[ (bm_1 + k_1 + 1 - b^{j_1 + 1} z_1) e^{\frac{2\pi i}{b} k_1 \ell_1} + \sum_{r_1 = k_1 + 1}^{b-1} e^{\frac{2\pi i}{b}r_1\ell_1} \right] (1 - z_2). \]       
  \item If $j = (-1,j_2)$ with $j_2 \in \N_0$ then there is a $k_2 \in \{0,1,\ldots,b-1\}$ such that $z$ is contained in $I_{jm}^{k_2}$. Then
        \[ \mu_{jm\ell} = b^{-j_2-1} (1-z_1) \left[ (bm_2 + k_2 + 1 - b^{j_2 + 1} z_2) e^{\frac{2\pi i}{b}k_2\ell_2} + \sum_{r_2 = k_2 + 1}^{b-1} e^{\frac{2\pi i}{b}r_2 \ell_2} \right]. \]        
  \item If $j = (-1,-1)$ then $\mu_{jm\ell} = (1 - z_1)(1 - z_2)$.  
\end{enumerate}
\end{lem}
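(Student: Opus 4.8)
The plan is to reduce everything to a one-dimensional computation by exploiting the tensor product structure. Since $h_{jm\ell}(x) = h_{j_1m_1\ell_1}(x_1)\,h_{j_2m_2\ell_2}(x_2)$ and $\1_{C_z}(x) = \1_{(z_1,1)}(x_1)\,\1_{(z_2,1)}(x_2)$, Fubini's theorem gives
\[ \mu_{jm\ell} = \left(\int_0^1 \1_{(z_1,1)}(t)\,h_{j_1m_1\ell_1}(t)\,\dint t\right)\left(\int_0^1 \1_{(z_2,1)}(t)\,h_{j_2m_2\ell_2}(t)\,\dint t\right), \]
so it is enough to evaluate $J(w) := \int_0^1 \1_{(w,1)}(t)\,h_{jm\ell}(t)\,\dint t$ for $w\in(0,1)$, $j\in\N_{-1}$, $m\in\D_j$, $\ell\in\B_j$, and then multiply the two resulting factors, setting $w=z_1$ in the first and $w=z_2$ in the second.

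Next I would compute $J(w)$. If $j=-1$ then $h_{-1,0,1}$ is the indicator of $[0,1)$, so $J(w)=1-w$. If $j\in\N_0$, I would use that $h_{jm\ell}$ is supported on $I_{jm}$, which splits into the $b$ subintervals $I_{jm}^k=[b^{-j-1}(bm+k),b^{-j-1}(bm+k+1))$, $k=0,\dots,b-1$, on each of which $h_{jm\ell}$ equals the constant $e^{\frac{2\pi i}{b}\ell k}$. Three cases occur: if $w\ge b^{-j}(m+1)$ then $(w,1)$ misses $I_{jm}$ and $J(w)=0$; if $w\le b^{-j}m$ then $(w,1)$ covers $I_{jm}$ up to a null set and $J(w)=b^{-j-1}\sum_{k=0}^{b-1}e^{\frac{2\pi i}{b}\ell k}=0$, since $\ell\in\{1,\dots,b-1\}$; and if $w$ lies in the interior of $I_{jm}$, say $w\in I_{jm}^{k}$, then the subintervals with index $r>k$ lie entirely in $(w,1)$ and contribute $b^{-j-1}e^{\frac{2\pi i}{b}\ell r}$ each, those with index $<k$ contribute nothing, and $I_{jm}^{k}\cap(w,1)$ contributes $(b^{-j-1}(bm+k+1)-w)e^{\frac{2\pi i}{b}\ell k}$; collecting these and factoring out $b^{-j-1}$ produces exactly the bracketed expression in the statement.

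Finally I would assemble the four cases. The vanishing claim follows because, if $z$ lies outside the interior of $I_{jm}$, then for some coordinate $i$ the corresponding one-dimensional factor is an integral of $h_{j_im_i\ell_i}$ over either the empty set or (up to a null set) all of $I_{j_im_i}$, both of which vanish, hence $\mu_{jm\ell}=0$. Multiplying two ``interior'' factors $J$ gives (i); multiplying one interior factor by a factor $1-z_i$ coming from a coordinate with $j_i=-1$ gives (ii) and (iii); and $j=(-1,-1)$ gives $(1-z_1)(1-z_2)$, which is (iv). I do not expect a genuine obstacle here: the lemma is a direct computation, and the only point needing a little care is the bookkeeping for the single subinterval $I_{jm}^{k}$ that is only partially covered by $(w,1)$ — everything else is immediate from the orthogonality relation $\sum_{k=0}^{b-1}e^{\frac{2\pi i}{b}\ell k}=0$.
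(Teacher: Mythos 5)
Your proposal is correct and is exactly the straightforward tensor-product/Fubini computation that the paper leaves as an ``easy exercise'': reduce to the one-dimensional integral $\int_0^1 \1_{(w,1)}h_{jm\ell}$, evaluate it on the $b$ subintervals $I_{jm}^k$ using $\sum_{k=0}^{b-1}e^{\frac{2\pi i}{b}\ell k}=0$, and multiply the factors. One tiny caveat (inherited from the lemma's own wording rather than a flaw in your argument): the vanishing assertion should be read via a coordinate with $j_i\in\N_0$, since for $j_i=-1$ the one-dimensional factor equals $1-z_i$ and never vanishes, so "both of which vanish" applies only to the $j_i\geq 0$ factors — which is all that is ever used.
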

The following lemmas are the last step in the computation of the Haar coefficients.

\begin{lem} \label{lem_coeff_calc}

Let $j \in \N_0^2,\,m \in \D_j,\ell \in \B_j$ such that $j_1 + j_2 < n - 1$. Then
\begin{multline*}
\sum_{z \in \Rn \cap I_{jm}} \left[ (bm_1 + k_1 + 1 - b^{j_1 + 1} z_1) e^{\frac{2\pi i}{b}k_1\ell_1} + \sum_{r_1=k_1+1}^{b-1} e^{\frac{2\pi i}{b} r_1 \ell_1} \right] \times\\
\shoveright{\times\left[(bm_2+k_2+1-b^{j_2+1}z_2)e^{\frac{2\pi i}{b}k_2\ell_2}+\sum_{r_2=k_2+1}^{b-1}e^{\frac{2\pi i}{b}r_2\ell_2}\right]}\\
\shoveleft{= \frac{b^{n - j_1 - j_2} \pm b^{j_1 + j_2 - n + 2}}{(e^{\frac{2\pi i}{b}\ell_1} - 1)(e^{\frac{2\pi i}{b}\ell_2} - 1)}.}\\
\text{By the sign $\pm$ in the numerator we mean either $+$ or $-$ depending on $j$.}\qquad\qquad
\end{multline*}

\end{lem}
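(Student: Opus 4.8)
The plan is to parametrize the points of $\Rn$ inside a fixed box $I_{jm}$ by their free $b$-adic digits and then carry out the sum digit by digit, invoking Lemma~\ref{lem_factor5} and the vanishing of complete sums of $b$-th roots of unity. Note that the two bracketed factors in the statement are exactly those appearing in Lemma~\ref{lem_haar_coeff_besov_indicator}(i), with $bm_i+k_i+1-b^{j_i+1}z_i=1-u_i$ where $u_i:=b^{j_i+1}z_i-(bm_i+k_i)\in[0,1)$ is the scaled position of $z_i$ inside its sub-box.

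\emph{Step 1 (locating the points).} Writing $z_1=\sum_{p=1}^n t_{n+1-p}b^{-p}$ and $z_2=\sum_{p=1}^n s_p(t_p)b^{-p}$, the condition $z\in I_{jm}$ fixes $t_n,\dots,t_{n-j_1+1}$ (the first $j_1$ digits of $z_1$) and $t_1,\dots,t_{j_2}$ (the first $j_2$ digits of $z_2$, using that each $s_i$ is a bijection). Since $j_1+j_2<n-1$ these index sets are disjoint, so $\Rn\cap I_{jm}$ is in bijection with the choices of the free digits $d_1:=t_{j_2+1},\dots,d_N:=t_{n-j_1}$, where $N:=n-j_1-j_2\geq2$, each running over $\{0,\dots,b-1\}$; moreover $k_1=d_N$ and $k_2=s_{j_2+1}(d_1)$.

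\emph{Step 2 (rewriting the brackets).} Put $\omega=e^{2\pi i/b}$ and $\phi_\ell(k,u):=(1-u)\omega^{k\ell}+\sum_{r=k+1}^{b-1}\omega^{r\ell}$, which is affine in $u$. Then the first bracket equals $\phi_{\ell_1}(d_N,u_1)$ and the second equals $\phi_{\ell_2}(s_{j_2+1}(d_1),u_2)$. Expanding the $b$-adic tails gives the key independence: $u_1=v_1+\sum_{\nu=1}^{N-1}d_\nu b^{\nu-N}$ does not involve $d_N$, while $u_2=v_2+\sum_{\nu=2}^N s_{j_2+\nu}(d_\nu)b^{1-\nu}$ does not involve $d_1$, where $v_1,v_2$ depend only on $m$. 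By affinity, $\phi_{\ell_1}(d_N,u_1)=\phi_{\ell_1}(d_N,v_1)-\bigl(\sum_{\nu=1}^{N-1}d_\nu b^{\nu-N}\bigr)\omega^{d_N\ell_1}$, and similarly for the second bracket.

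\emph{Step 3 (summation) and main obstacle.} Multiplying out gives four terms. The product of the two $v_i$-terms factorizes: summing $d_2,\dots,d_{N-1}$ freely gives $b^{N-2}$, while $\sum_{d_1}\phi_{\ell_2}(s_{j_2+1}(d_1),v_2)=b/(\omega^{\ell_2}-1)$ and $\sum_{d_N}\phi_{\ell_1}(d_N,v_1)=b/(\omega^{\ell_1}-1)$ by Lemma~\ref{lem_factor5} together with $\sum_{k=0}^{b-1}\omega^{k\ell}=0$ and the bijectivity of $s_{j_2+1}$, so this contributes $b^N/((\omega^{\ell_1}-1)(\omega^{\ell_2}-1))$. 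The two mixed terms vanish because there $d_1$ occurs only in $\omega^{s_{j_2+1}(d_1)\ell_2}$ and $d_N$ only in $\omega^{d_N\ell_1}$, producing a complete root-of-unity sum. In the product of the two correction terms, $\sum_\nu\sum_\mu b^{\nu-N}b^{1-\mu}d_\nu s_{j_2+\mu}(d_\mu)\omega^{d_N\ell_1}\omega^{s_{j_2+1}(d_1)\ell_2}$, summing over $d_1$ forces $\nu=1$ and summing over $d_N$ forces $\mu=N$; the surviving summand has coefficient $b^{2-2N}$, the middle digits contribute $b^{N-2}$, and $\sum_{d_1}d_1\omega^{s_{j_2+1}(d_1)\ell_2}=\pm b/(\omega^{\ell_2}-1)$, $\sum_{d_N}s_{n-j_1}(d_N)\omega^{d_N\ell_1}=\pm b/(\omega^{\ell_1}-1)$ by Lemma~\ref{lem_factor5} (with a reflection $d\mapsto b-1-d$ contributing a minus sign when the relevant map is not the identity). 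Collecting powers of $b$ yields $\pm b^{2-N}/((\omega^{\ell_1}-1)(\omega^{\ell_2}-1))$, and adding the two surviving contributions and substituting $N=n-j_1-j_2$ gives the claim. The main obstacle is exactly the bookkeeping of Step 2: one must check carefully from the $b$-adic expansions that $u_1$ is independent of $d_N$ and $u_2$ of $d_1$, and track the mirroring maps $s_i$ so that in the surviving term the two signs are attributed to $s_{j_2+1}$ and $s_{n-j_1}$; once this dependency structure is fixed, every summation reduces to Lemma~\ref{lem_factor5} or to a vanishing sum of $b$-th roots of unity, and no genuinely new estimate is needed.
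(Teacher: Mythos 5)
Your proof is correct and follows essentially the same route as the paper: characterize $\Rn\cap I_{jm}$ by fixing the digits determined by $m$, expand the product of the two affine brackets, and observe that all terms vanish by complete sums of $b$-th roots of unity except the main term (via Lemma~\ref{lem_factor5}) and the cross term involving $t_{j_2+1}$ and $s_{n-j_1}$, whose signs depend on whether these maps are the identity or the reflection. The only difference is organizational: the paper groups the middle free digits into a single index $h$ together with a permutation $\sigma$, while you keep all free digits explicit, which yields the same two surviving contributions $b^{n-j_1-j_2}$ and $\pm b^{j_1+j_2-n+2}$.
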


\begin{proof}

Let $z \in I_{jm}$. Then there is a $k \in \{0,1,\ldots,b-1\}^2$ such that $z \in I_{jm}^k$. We have $0 \leq m_i < b^{j_i},\,i=1,2$. Hence we can expand $m_i$ in base $b$ as
\[ m_i = b^{j_i - 1} m_1^{(i)} + b^{j_i - 2} m_2^{(i)} + \ldots + m_{j_i}^{(i)}. \]
Since $z \in \Rn \cap I_{jm}^k$ we have
\[ b^{-j_1 - 1}(bm_1 + k_1) \leq \frac{t_n}{b} + \frac{t_{n-1}}{b^2} + \ldots + \frac{t_1}{b^n} < b^{-j_1 - 1}(bm_1 + k_1 + 1). \]
Inserting the expansion of $m_1$ in the last inequality gives us
\begin{align*}
\frac{m_1^{(1)}}{b} + \frac{m_2^{(1)}}{b^2} + \ldots + \frac{m_{j_1}^{(1)}}{b^{j_1}} + \frac{k_1}{b^{j_1+1}} & \leq \frac{t_n}{b} + \frac{t_{n-1}}{b^2} + \ldots+\frac{t_1}{b^n}\\
& < \frac{m_1^{(1)}}{b} + \frac{m_2^{(1)}}{b^2} + \ldots + \frac{m_{j_1}^{(1)}}{b^{j_1}} + \frac{k_1+1}{b^{j_1+1}}.
\end{align*}
Analogously we have
\[ b^{-j_2 - 1} (bm_2+k_2) \leq \frac{s_1}{b} + \frac{s_2}{b^2} + \ldots + \frac{s_n}{b^n} < b^{-j_2 - 1}(bm_2 + k_2 + 1). \]
Hence
\begin{align*}
\frac{m_1^{(2)}}{b} + \frac{m_2^{(2)}}{b^2} + \ldots + \frac{m_{j_2}^{(2)}}{b^{j_2}} + \frac{k_2}{b^{j_2+1}} & \leq \frac{s_1}{b} + \frac{s_2}{b^2} + \ldots + \frac{s_n}{b^n}\\
& < \frac{m_1^{(2)}}{b} + \frac{m_2^{(2)}}{b^2} + \ldots + \frac{m_{j_2}^{(2)}}{b^{j_2}} + \frac{k_2+1}{b^{j_2+1}}.
\end{align*}
So one gets a characterization of the fact that $z \in \Rn\cap I_{jm}^k$ in the form
\[ t_n = m_1^{(1)},\,t_{n - 1} = m_2^{(1)},\,\ldots,\,t_{n - j_1 + 1} = m_{j_1}^{(1)},\,t_{n - j_1} = k_1 \]
and
\[ s_1 = m_1^{(2)},\,s_2 = m_2^{(2)},\,\ldots,\,s_{j_2} = m_{j_2}^{(2)},\,s_{j_2+1} = k_2. \]

Hence $t_1,t_2,\ldots,t_{j_2}$ and $t_{n-j_1+1},\ldots,t_{n-1},t_n$ are determined by the condition $z \in \Rn \cap I_{jm}$ and $t_{n-j_1}$ and $t_{j_2+1}$ are determined by $k = (k_1,k_2)$ for which $z \in I_{jm}^k$ while $t_{j_2 + 2},\ldots,t_{n - j_1 - 1}\in\{0,1,\ldots,b-1\}$ can be chosen arbitrarily. Then we calculate
\begin{align*}
& bm_1 + k_1 + 1 - b^{j_1 + 1} z_1\\
& = 1 + b^{j_1} t_n + b^{j_1 - 1} t_{n - 1} + \ldots + bt_{n - j_1 + 1} + t_{n - j_1}\\
&\qquad - b^{j_1} t_n - b^{j_1 - 1} t_{n - 1} - \ldots - b^{j_1 - n + 1} t_1\\
& = 1 - b^{-1} t_{n - j_1 - 1} - \ldots - b^{j_1 - n + 1} t_1\\
& = 1 - b^{-1} t_{n - j_1 - 1} - \ldots - b^{j_1 + j_2 - n + 2} t_{j_2 + 2} - b^{j_1+  j_2 - n + 1} t_{j_2 + 1} - \varepsilon_1
\end{align*}
where
\[ \varepsilon_1 = b^{j_1 + j_2 - n} t_{j_2} + \ldots + b^{j_1 - n + 1} t_1 \]
and
\begin{align*}
& bm_2 + k_2 + 1 - b^{j_2 + 1} z_2\\
& = 1 + b^{j_2} s_1 + b^{j_2 - 1} s_2 + \ldots + bs_{j_2} + s_{j_2 + 1}\\
&\qquad - b^{j_2} s_1 - b^{j_2-  1} s_2 - \ldots - b^{j_2 - n + 1} s_n\\
& = 1 - b^{-1} s_{j_2 + 2} - \ldots - b^{j_2 - n + 1} s_n\\
& = 1 - b^{-1} s_{j_2 - 2} - \ldots - b^{j_1 + j_2 - n + 2} s_{n - j_1 - 1} - b^{j_1 + j_2 - n + 1} s_{n - j_1} - \varepsilon_2
\end{align*}
where
\[ \varepsilon_2 = b^{j_1 + j_2 - n} s_{n - j_1 + 1} + \ldots + b^{j_1 - n + 1} s_n. \]
This means that
\[ b m_1 + k_1 + 1 - b^{j_1 + 1} z_1 = h b^{j_1 + j_2 - n + 2} - b^{j_1 + j_2 - n + 1} t_{j_2 + 1} - \varepsilon_1 \]
for $h = 1,2,\ldots,b^{n - j_1 - j_2 - 2}$. It is clear that there must be some permutation $\sigma$ of $\{ 1, 2, \ldots, b^{n - j_1 - j_2 - 2} \}$ such that
\[ b m_2 + k_2 + 1 - b^{j_2 + 1} z_2 = \sigma(h) b^{j_1 + j_2 - n + 2} - b^{j_1 + j_2 - n + 1} s_{n - j_1} - \varepsilon_2. \]
We abbreviate $X = n - j_1 - j_2 - 2$. Then
\begin{align*}
& \sum_{z \in \Rn \cap I_{jm}} \left[ (bm_1 + k_1 + 1 - b^{j_1 + 1} z_1) e^{\frac{2\pi i}{b} k_1 \ell_1} + \sum_{r_1 = k_1 + 1}^{b-1} e^{\frac{2\pi i}{b} r_1 \ell_1} \right] \times\\
&\qquad\qquad\qquad\qquad\qquad\qquad \times \left[ (bm_2 + k_2 + 1 - b^{j_2 + 1} z_2) e^{\frac{2\pi i}{b}k_2\ell_2} + \sum_{r_2 = k_2 + 1}^{b-1} e^{\frac{2\pi i}{b} r_2 \ell_2} \right]\\
& = \sum_{k_1 = 0}^{b-1} \sum_{k_2 = 0}^{b-1} \sum_{z \in \Rn \cap I_{jm}^k} \left[ \ldots \right] \times \left[ \ldots \right]\\
& = \sum_{k_1 = 0}^{b-1} \sum_{k_2 = 0}^{b-1} \sum_{h = 1}^{b^X} \left[ \left( hb^{-X} - b^{-X - 1} t_{j_2 + 1} - \varepsilon_1 \right) e^{\frac{2\pi i}{b} k_1 \ell_1} + \sum_{r_1 = k_1 + 1}^{b-1} e^{\frac{2\pi i}{b} r_1 \ell_1} \right] \times\\
&\qquad\qquad\qquad\qquad \times \left[ \left( \sigma(h) b^{-X} - b^{-X - 1} s_{n - j_1} - \varepsilon_2 \right) e^{\frac{2\pi i}{b} k_2 \ell_2} + \sum_{r_2 = k_2 + 1}^{b-1} e^{\frac{2\pi i}{b} r_2 \ell_2} \right].
\end{align*}

After having expanded the product and changed the order of summation we analyze the summands separately in a fitting order. We recall that $s_{n-j_1}$ depends on $k_1$ and $t_{j_2+1}$ depends on $k_2$. Except the last two, all summands are equal to zero because each has the sum of unity roots as a factor. The nonzero summands are
\[ \sum_{h=1}^{b^X} \sum_{k_1 = 0}^{b-1} \sum_{r_1 = k_1 + 1}^{b-1} e^{\frac{2\pi i}{b} r_1 \ell_1} \sum_{k_2 = 0}^{b-1} \sum_{r_2 =k _2 + 1}^{b-1} e^{\frac{2\pi i}{b} r_2 \ell_2} = \frac{b^{n - j_1 - j_2}}{(e^{\frac{2\pi i}{b} \ell_1} - 1)(e^{\frac{2\pi i}{b} \ell_2} - 1)} \]
(by Lemma \ref{lem_factor5}) and
\begin{multline*}
\sum_{h=1}^{b^X} \sum_{k_1 = 0}^{b-1} \sum_{k_2 = 0}^{b-1} b^{-X - 1} t_{j_2 + 1} b^{-X - 1} s_{n - j_1} e^{\frac{2\pi i}{b} k_1 \ell_1} e^{\frac{2\pi i}{b} k_2 \ell_2}\\
= b^{j_1 + j_2 - n} \sum_{k_1 = 0}^{b-1} s_{n - j_1} e^{\frac{2\pi i}{b} k_1 \ell_1} \sum_{k_2 = 0}^{b-1} t_{j_2 + 1} e^{\frac{2\pi i}{b} k_2 \ell_2}.
\end{multline*}
We know that $t_{n - j_1} = k_1$ and that either $s_i = t_i$ or $s_i = b - 1 - t_i$ for all $i = 1,\ldots,n$. Hence $s_{n - j_1}$ is either $k_1$ or $b - 1 - k_1$. Since
\[ \sum_{k_1 = 0}^{b-1} (b-1) e^{\frac{2\pi i}{b} k_1 \ell_1} = 0 \]
we have
\begin{align} \label{sign_term}
\sum_{k_1 = 0}^{b-1} s_{n - j_1} e^{\frac{2\pi i}{b} k_1 \ell_1} = \pm \frac{b}{e^{\frac{2\pi i}{b} \ell_1} - 1}
\end{align}
using Lemma \ref{lem_factor5} and the sign depends on $j_1$. Also we know that $s_{j_2 + 1} = k_2$ and that either $s_{j_2 + 1} = t_{j_2 + 1}$ or $s_{j_2} = b - 1 - t_{j_2 + 1}$. Hence
\[ \sum_{k_2 = 0}^{b-1} t_{j_2 + 1} e^{\frac{2\pi i}{b} k_2 \ell_2} = \pm \frac{b}{e^{\frac{2\pi i}{b} \ell_2} - 1} \]
and the sign depends on $j_2$. So alltogether our last summand is
\[ b^{j_1 + j_2 - n} \frac{\pm b^2}{(e^{\frac{2\pi i}{b} \ell_1} - 1) (e^{\frac{2\pi i}{b} \ell_2} - 1)} = \frac{\pm b^{j_1 + j_2 - n + 2}}{(e^{\frac{2\pi i}{b} \ell_1} - 1)(e^{\frac{2\pi i}{b} \ell_2} - 1)} \]
and the sign depends on $j$. Adding both summands which are nonzero gives us the stated result. One can find a longer though straightforward version of the calculation in \cite{M12}

\end{proof}

\begin{lem} \label{lem_xn}

Let
\[ x_n := \sum_{t_1,\ldots,t_n = 0}^{b-1}\sum_{j=1}^n b^{-j} t_j \]
and
\[ y_n := \sum_{t_1,\ldots,t_n = 0}^{b-1} \sum_{i=1}^n b^i t_i \]
for any positive integer $n$. Then
\[ x_n = \frac{1}{2}(b^n - 1) \]
and
\[ y_n = b^{n + 1} x_n = \frac{1}{2} b^{n+1} (b^n - 1). \]

\end{lem}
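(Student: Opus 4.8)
The plan is to evaluate both sums by interchanging the order of summation and exploiting the symmetry of the index set $\{0,1,\ldots,b-1\}^n$ under permutation of the coordinates. Since the inner expression $\sum_{j=1}^n b^{-j} t_j$ is linear in the $t_j$, I would first write
\[ x_n = \sum_{j=1}^n b^{-j} \sum_{t_1,\ldots,t_n=0}^{b-1} t_j. \]
For a fixed $j$, the $n-1$ coordinates $t_i$ with $i \neq j$ are free and each runs over $b$ values, so the inner sum equals $b^{n-1}\sum_{t=0}^{b-1} t = b^{n-1}\cdot\frac{b(b-1)}{2} = \frac{1}{2}b^n(b-1)$, independently of $j$.

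Then I would insert the value of the finite geometric series $\sum_{j=1}^n b^{-j} = \frac{1-b^{-n}}{b-1}$, which gives
\[ x_n = \frac{1}{2}b^n(b-1)\cdot\frac{1-b^{-n}}{b-1} = \frac{1}{2}b^n(1-b^{-n}) = \frac{1}{2}(b^n-1), \]
as claimed.

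For $y_n$ I would observe that the substitution $i = n+1-j$ turns $b^{n+1}\sum_{j=1}^n b^{-j}t_j$ into $\sum_{i=1}^n b^i t_{n+1-i}$; since the coordinate reversal $(t_1,\ldots,t_n)\mapsto(t_n,\ldots,t_1)$ is a bijection of $\{0,1,\ldots,b-1\}^n$, summing over all tuples is unchanged if one replaces $t_{n+1-i}$ by $t_i$. Hence $b^{n+1}x_n = y_n$, and combining with the formula for $x_n$ yields $y_n = \frac{1}{2}b^{n+1}(b^n-1)$. Alternatively, one can simply repeat the first computation verbatim with $\sum_{i=1}^n b^i = \frac{b(b^n-1)}{b-1}$ in place of $\sum_{j=1}^n b^{-j}$.

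There is no genuine obstacle here: the statement is an elementary finite-summation identity. The only points needing a modicum of care are keeping track of which coordinate is being summed when the order of summation is exchanged, and evaluating the two geometric series correctly — both entirely routine. One could also give an alternative proof by induction on $n$, but the direct computation above is shorter and more transparent.
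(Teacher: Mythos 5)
Your argument is correct. It differs from the paper only in how the value of $x_n$ is obtained: the paper computes $x_1$ and then runs an induction on $n$, peeling off the last digit via the recursion $x_n = b\,x_{n-1} + b^{-n}b^{n-1}\tfrac{b(b-1)}{2}$, whereas you evaluate $x_n$ directly by exchanging the order of summation, noting that for each fixed $j$ the remaining $n-1$ digits contribute a factor $b^{n-1}$ and $\sum_{t=0}^{b-1}t = \tfrac{b(b-1)}{2}$, and then summing the geometric series $\sum_{j=1}^n b^{-j}$. Both are routine; your closed-form computation is a bit more transparent and, as you note, applies verbatim to $y_n$ with $\sum_{i=1}^n b^i$ in place of $\sum_{j=1}^n b^{-j}$, while the paper's inductive scheme is the one it reuses for the quadratic sum $z_n$ of Lemma \ref{lem_zn}, whose proof is declared ``analogous.'' For the identity $y_n = b^{n+1}x_n$ your primary argument (the substitution $i=n+1-j$ together with the coordinate-reversal bijection of $\{0,1,\ldots,b-1\}^n$) is exactly the paper's, including the observation that the relabelling of the digits does not affect the total sum.
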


\begin{proof}

Clearly, $x_1 = \frac{1}{2} (b-1)$ and inductively
\begin{align*}
x_n & = \sum_{t_n} \sum_{t_1,\ldots,t_{n-1}} \sum_{j=1}^{n-1} b^{-j} t_j + b^{-n} \sum_{t_1,\ldots,t_{n-1}} \sum_{t_n} t_n\\
    & = b \, x_{n-1} + b^{-n} \, b^{n-1} \, \frac{b \, (b-1)}{2}\\
    & = b \, \frac{1}{2} \, (b^{n-1} - 1) + \frac{1}{2} \, (b-1)\\
    & = \frac{1}{2} \, (b^n-1).
\end{align*}
One sees that $y_n = b^{n+1} x_n$ simply by checking that
\[ \sum_{i=1}^n b^i t_i = b^{n+1} \sum_{i=1}^n b^{i - n - 1} t_i = b^{n+1} \sum_{i=1}^n b^{-i} t_{n + 1 - i}. \]
Summing over $t_1,\ldots,t_n$ will give us $y_n$ on the left side. On the right side it will give us $b^{n+1} x_n$ although the order of the $t_i$ is reversed with respect to the definition of the numbers $x_n$.

\end{proof}

We will use this fact that the order of the $t_i$ is irrelevant in further proofs. But not only the order is irrelevant but even the concrete index of the $t_j$. For example the value of
\[ \sum_{t_{n+1},\ldots,t_{2n} = 0}^{b-1} \sum_{j=1}^n b^{-j} t_{j + n} \]
is the same as the value of $x_n$.

\begin{lem} \label{lem_zn}

Let
\[ z_n := \sum_{t_1,\ldots,t_n = 0}^{b-1} \sum_{i,j=1}^n b^{i-j} t_i t_j \]
for any positive integer $n$. Then
\[ z_n = \frac{1}{4} b^{2n+1} + \frac{n}{12} b^{n+2} - \frac{1}{2} b^{n+1} - \frac{n}{12} b^n + \frac{1}{4} b. \]

\end{lem}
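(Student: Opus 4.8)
The plan is to split the inner double sum over $i,j$ into its diagonal part ($i=j$) and its off-diagonal part ($i\neq j$) and to exploit that the variables $t_1,\dots,t_n$ run independently over $\{0,\dots,b-1\}$. On the diagonal $b^{i-j}=1$, so the diagonal contributes $\sum_{t_1,\dots,t_n}\sum_{i=1}^n t_i^2 = n\,b^{n-1}\sum_{t=0}^{b-1}t^2$, since for each fixed $i$ the remaining $n-1$ variables only produce the factor $b^{n-1}$. For a fixed pair $i\neq j$ the two variables $t_i,t_j$ are independent of each other and of the other $n-2$ variables, so $\sum_{t_1,\dots,t_n} t_i t_j = b^{n-2}\bigl(\sum_{t=0}^{b-1}t\bigr)^2$, and hence the off-diagonal part equals $b^{n-2}\bigl(\sum_{t=0}^{b-1}t\bigr)^2\sum_{\substack{i,j=1\\ i\neq j}}^{n}b^{i-j}$.

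It then remains to evaluate $\sum_{i\neq j}b^{i-j}$. Here I would write $\sum_{i,j=1}^n b^{i-j}=\bigl(\sum_{i=1}^n b^i\bigr)\bigl(\sum_{j=1}^n b^{-j}\bigr)=\frac{(b^n-1)^2}{b^{n-1}(b-1)^2}$ by the geometric series formula, and subtract the $n$ diagonal terms to obtain $\sum_{i\neq j}b^{i-j}=\frac{(b^n-1)^2}{b^{n-1}(b-1)^2}-n$. Substituting the elementary identities $\sum_{t=0}^{b-1}t=\frac{b(b-1)}{2}$ and $\sum_{t=0}^{b-1}t^2=\frac{(b-1)b(2b-1)}{6}$ and using $b^{n-2}\bigl(\tfrac{b(b-1)}{2}\bigr)^2=\frac{b^n(b-1)^2}{4}$, one arrives at
\[ z_n = \frac{nb^n(b-1)(2b-1)}{6} + \frac{b(b^n-1)^2}{4} - \frac{nb^n(b-1)^2}{4}. \]

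Finally I would simplify. The second term expands to $\tfrac14 b^{2n+1}-\tfrac12 b^{n+1}+\tfrac14 b$, and collecting the two $n$-dependent terms with the factor $nb^n(b-1)$ and using $\tfrac{2b-1}{6}-\tfrac{b-1}{4}=\tfrac{b+1}{12}$ gives $nb^n(b-1)\cdot\tfrac{b+1}{12}=\tfrac{n}{12}b^{n+2}-\tfrac{n}{12}b^n$, which assembles into the claimed closed form. There is no genuine obstacle beyond keeping the bookkeeping of the independent summations straight and carrying out the final algebraic simplification carefully; grouping the $n$-dependent terms before expanding, as above, is what keeps that step painless.
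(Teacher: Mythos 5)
Your proof is correct: the diagonal/off-diagonal split, the independence of the $t_i$, the factorization $\sum_{i,j=1}^n b^{i-j}=\bigl(\sum_{i=1}^n b^i\bigr)\bigl(\sum_{j=1}^n b^{-j}\bigr)=\frac{(b^n-1)^2}{b^{n-1}(b-1)^2}$, and the final simplification via $\tfrac{2b-1}{6}-\tfrac{b-1}{4}=\tfrac{b+1}{12}$ all check out and reproduce the stated closed form (the formula even degenerates correctly for $n=1$, where the off-diagonal count is $0$). The paper itself gives no written proof of this lemma; it only remarks that the proof is ``analogous to above,'' i.e.\ to the inductive proof of Lemma \ref{lem_xn}, which would mean setting up a recursion for $z_n$ by splitting off the variable $t_n$ and expressing $z_n$ through $z_{n-1}$ together with the already computed quantities $x_{n-1}$ and $y_{n-1}$, then verifying the closed form by induction. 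Your argument is a genuinely different, direct evaluation: it avoids any recursion and any reliance on Lemma \ref{lem_xn}, at the price of a slightly heavier one-shot algebraic simplification, whereas the paper's intended route recycles the earlier lemmas and pushes the algebra into an inductive verification. Both are elementary and complete; yours has the advantage of being self-contained and of making transparent where each term of the closed form comes from (the $\frac14 b^{2n+1}-\frac12 b^{n+1}+\frac14 b$ block from the full double geometric sum, the $\frac{n}{12}(b^{n+2}-b^{n})$ block from the diagonal correction).
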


The proof is analogous to above.

\begin{lem} \label{lem_sum_z}

Let $z=(z_1,z_2)$. Then
\[ \sum_{z \in \Rn} (1 - z_1) (1 - z_2) = 1 + b^{-n - 1} \sum_{t_1,\ldots,t_n}^{b-1} \sum_{i,j=1}^n b^{i-j} t_i s_j. \]

\end{lem}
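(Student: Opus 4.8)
The plan is to expand the product $(1-z_1)(1-z_2) = 1 - z_1 - z_2 + z_1 z_2$ and sum the four resulting terms over $\Rn$ separately. Writing a generic point of $\Rn$ as $z = (z_1,z_2)$, it is convenient to reindex the digits of the first coordinate so that
\[ z_1 = \sum_{i=1}^n b^{i-n-1} t_i, \qquad z_2 = \sum_{j=1}^n b^{-j} s_j , \]
where $s_j = s_j(t_j)$. The constant term then contributes $\#\Rn = b^n$.

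For the linear terms I would invoke Lemma \ref{lem_xn}, together with the remark following it that the concrete indices attached to the $t_j$ are irrelevant: summing $z_1$ over all $t_1,\ldots,t_n\in\{0,\ldots,b-1\}$ gives $x_n = \tfrac12(b^n-1)$. For $\sum_{z\in\Rn} z_2$ one notes that for each fixed $j$ we have $\sum_{t_j=0}^{b-1} s_j(t_j) = \sum_{t=0}^{b-1} t = \tfrac{b(b-1)}{2}$ regardless of whether $s_j(t)=t$ or $s_j(t)=b-1-t$; carrying out the remaining geometric sum over $\sum_{j=1}^n b^{-j}$ again yields $\tfrac12(b^n-1)$. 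Hence $-\sum_{z}z_1 - \sum_{z}z_2 = -(b^n-1)$.

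For the cross term, inserting the two expansions gives $z_1 z_2 = b^{-n-1}\sum_{i,j=1}^n b^{i-j} t_i s_j$, and summing over all digit vectors produces precisely $b^{-n-1}\sum_{t_1,\ldots,t_n=0}^{b-1}\sum_{i,j=1}^n b^{i-j} t_i s_j$; this term is left unevaluated since the signs hidden in the $s_j$ depend on the particular choice of $\Rn$. Adding the four contributions, $b^n - (b^n-1) = 1$ plus the cross term, yields exactly the claimed identity. There is no genuine obstacle here: the only point requiring care is keeping the digit orderings of the two coordinates consistent (the first coordinate reverses the order of the $t_j$), which is exactly the bookkeeping issue already flagged in the remark after Lemma \ref{lem_xn}.
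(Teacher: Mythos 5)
Your proposal is correct and follows essentially the same route as the paper's proof: expand $(1-z_1)(1-z_2)$, observe that the constant term contributes $b^n$, that each linear digit sum contributes $\tfrac12(b-1)b$ per digit regardless of whether $s_j(t)=t$ or $s_j(t)=b-1-t$ (so the linear terms total $-(b^n-1)$), and that the cross term is exactly $b^{-n-1}\sum_{i,j}b^{i-j}t_is_j$. The only cosmetic difference is that you invoke Lemma \ref{lem_xn} for the linear sums where the paper computes them directly; the bookkeeping on digit ordering is handled the same way.
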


\begin{proof}

We first calculate for some $z \in \Rn$
\begin{align*}
(1 - z_1) (1 - z_2) & = (1 - b^{-1} t_n - \ldots -b ^{-n} t_1) (1 - b^{-1} s_1 - \ldots - b^{-n} s_n)\\
                    & = 1 - b^{-1} t_n - \ldots - b^{-n} t_1 - b^{-1} s_1 - \ldots - b^{-n} s_n +\\
                    &\qquad + \sum_{i,j=1}^n b^{-n + i - j - 1} t_i s_j.
\end{align*}

Now we sum over all $z \in \Rn$ which corresponds to summing over all $t_1,\ldots,t_n \in\{0,1,\ldots,b-1\}$ and get
\begin{align*}
& \sum_{z \in \Rn \cap I_{(-1,-1),(0,0)}} (1 - z_1) (1 - z_2)\\
& = \sum_{t_1,\ldots,t_n} \left(1 - b^{-1} t_n - \ldots - b^{-n} t_1 - b^{-1} s_1 - \ldots - b^{-n} s_n + b^{-n - 1}\sum_{i,j=1}^n b^{i-j} t_i s_j \right)\\
& = b^n - b^{-1} \, b^{n-1} \sum_{t_n = 0}^{b-1} t_n - b^{-1} \, b^{n-1} \sum_{t_1 = 0}^{b-1} s_1 - \ldots - b^{-n} \, b^{n-1} \sum_{t_1 = 0}^{b-1} t_1 -\\
&\qquad\qquad -b^{-n} \, b^{n-1} \sum_{t_n = 0}^{b-1} s_n + b^{-n-1} \sum_{t_1,\ldots,t_n} \sum_{i,j=1}^n b^{i-j} t_i s_j\\
& = b^n - 2 \left( b^{n-2} \, \frac{1}{2} \, (b-1) \, b + \ldots + b^{-1} \, \frac{1}{2} \, (b-1) \, b \right) + b^{-n - 1} \sum_{t_1,\ldots,t_n} \sum_{i,j=1}^n b^{i-j} t_i s_j\\
& = b^n - (b-1) (b^{n-1} + \ldots + 1) + b^{-n - 1} \sum_{t_1,\ldots,t_n} \sum_{i,j=1}^n b^{i-j} t_i s_j\\
& = 1 + b^{-n - 1} \sum_{t_1,\ldots,t_n} \sum_{i,j=1}^n b^{i-j} t_i s_j\\
\end{align*}

\end{proof}

\begin{lem} \label{lem_a_ident}

We consider a generalized Hammersley type point set $\Rn$. Then
\[ \sum_{t_1,\ldots,t_n = 0}^{b-1} \sum_{i,j=1}^n b^{i-j} t_i s_j = \frac{1}{4} b^{2n+1} - \frac{1}{2} b^{n+1} + \frac{1}{4} b + (2a_n - n) \frac{b^2 - 1}{12} b^n. \]

\end{lem}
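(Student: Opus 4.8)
The plan is to reduce the double sum $\sum_{t_1,\ldots,t_n}\sum_{i,j=1}^n b^{i-j}t_is_j$ to combinations of the quantities already computed in Lemmas \ref{lem_xn} and \ref{lem_zn}, by exploiting the defining property of a generalized Hammersley type point set, namely that for each index $i$ either $s_i=t_i$ or $s_i=b-1-t_i$. First I would split the set $\{1,\ldots,n\}$ into $A=\{i:s_i=t_i\}$ and its complement $A^c=\{i:s_i=b-1-t_i\}$, with $|A|=a_n$ and $|A^c|=n-a_n$. Then $s_j$ equals $t_j$ for $j\in A$ and $(b-1)-t_j$ for $j\in A^c$, so the sum decomposes as
\[
\sum_{t_1,\ldots,t_n}\sum_{i=1}^n\sum_{j=1}^n b^{i-j}t_i s_j
=\sum_{t_1,\ldots,t_n}\sum_{i=1}^n b^i t_i\Bigl(\sum_{j\in A}b^{-j}t_j+\sum_{j\in A^c}b^{-j}(b-1-t_j)\Bigr).
\]

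Next I would expand and use the factorisation of the sum over $t_1,\ldots,t_n$ (the variables are independent, each ranging over $\{0,\ldots,b-1\}$). Terms where the index $i$ differs from the index $j$ factor into a product of two single-variable sums of the form $\sum_{t}b^{\pm\text{(power)}}t=\tfrac12(b-1)b^{\pm\text{(power)}}$ times $b^{n-2}$ (from the $n-2$ free variables), while diagonal terms $i=j$ produce $\sum_t t^2=\tfrac16(b-1)(2b-1)$ or $\sum_t t(b-1-t)$ times $b^{n-1}$. Here the key point is that, just as observed after Lemma \ref{lem_xn}, the actual values of the indices in $A$ and $A^c$ are irrelevant for these one-variable factors — only their \emph{cardinalities} $a_n$ and $n-a_n$ matter once the index structure of the sum $\sum_{i,j}b^{i-j}$ is taken into account. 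In fact, rewriting $\sum_{i,j}b^{i-j}t_is_j$ over $A$ alone reproduces exactly $z_n$ from Lemma \ref{lem_zn} (up to relabeling of indices, which is harmless), and the contribution from $A^c$ can be written via $z_n$, $x_n$ (Lemma \ref{lem_xn}) and the elementary geometric sums $\sum b^{i-j}$ after substituting $s_j=(b-1)-t_j$.

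Assembling the pieces, the part symmetric in the bookkeeping gives the base value $\tfrac14 b^{2n+1}-\tfrac12 b^{n+1}+\tfrac14 b$, which is precisely $z_n$ stripped of its $n$-dependent middle terms $\tfrac{n}{12}b^{n+2}-\tfrac{n}{12}b^n=\tfrac{n}{12}(b^2-1)b^n$; the deviation from this base value is linear in how many indices lie in $A$ versus $A^c$, and turns out to be $(2a_n-n)\tfrac{b^2-1}{12}b^n$. Concretely, each index $j\in A$ contributes $+\tfrac{b^2-1}{12}b^n$ to the ``$n$-dependent'' part and each $j\in A^c$ contributes $-\tfrac{b^2-1}{12}b^n$, because the diagonal sums $\sum_t t^2$ and $\sum_t t(b-1-t)$ differ by exactly $\tfrac{b^2-1}{6}$ while off-diagonal factors $\sum_t t$ and $\sum_t(b-1-t)$ coincide. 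Hence the total is $z_n$'s base part plus $(a_n-(n-a_n))\tfrac{b^2-1}{12}b^n=(2a_n-n)\tfrac{b^2-1}{12}b^n$, which is the claimed identity. The main obstacle is purely bookkeeping: one must carefully track which single-variable sums appear with which powers of $b$ and confirm that all the $a_n$-independent cross terms recombine exactly into the $z_n$-type closed form; once the substitution $s_j=(b-1)-t_j$ is made on $A^c$ and the sums are factored, everything reduces to the already-established Lemmas \ref{lem_xn} and \ref{lem_zn} plus finite geometric series.
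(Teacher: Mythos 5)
Your proposal is correct, and it takes a genuinely different route from the paper. The paper first relabels the coordinates so that the indices with $s_i=t_i$ are exactly $1,\ldots,a_n$, then splits $\sum_{i,j}b^{i-j}t_is_j$ into the four blocks $i,j\le a_n$; $i\le a_n<j$; $j\le a_n<i$; $i,j>a_n$, and evaluates each block through $x_{a_n}$, $y_{a_n}=b^{a_n+1}x_{a_n}$, $z_{a_n}$ and $x_{n-a_n}$, $y_{n-a_n}$, $z_{n-a_n}$ from Lemmas \ref{lem_xn} and \ref{lem_zn}, finally inserting the closed forms and simplifying. You instead split by diagonal versus off-diagonal pairs $(i,j)$: since $\sum_{t=0}^{b-1}t=\sum_{t=0}^{b-1}(b-1-t)$, every off-diagonal term is insensitive to whether $s_j=t_j$ or $s_j=b-1-t_j$, so all dependence on the point set sits on the diagonal, where each index contributes $b^{n-1}\sum_t t^2$ or $b^{n-1}\sum_t t(b-1-t)$ according to its type, and only the cardinalities $a_n$ and $n-a_n$ enter. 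This buys two things: it removes the relabeling step entirely (and in fact supplies the justification the paper's ``otherwise we would have to rename the $t_j$'' glosses over, since the relabeling is harmless precisely because of this off-diagonal invariance), and it reduces the whole computation to single-variable moments plus the geometric series $\sum_{i,j}b^{i-j}$, which indeed gives the base value $\tfrac14 b^{2n+1}-\tfrac12 b^{n+1}+\tfrac14 b=\tfrac{b(b^n-1)^2}{4}$ plus the deviation $(2a_n-n)\tfrac{b^2-1}{12}b^n$. Two small blemishes, neither fatal: the side remark that the sum restricted to $A$ ``reproduces exactly $z_n$ up to relabeling'' is not literally true for a non-contiguous $A$, since block sums such as $\sum_{i,j\in A}b^{i-j}$ depend on the actual positions and not just on $|A|$ (your main diagonal/off-diagonal argument never uses this); and the two diagonal sums differ by $\tfrac{b(b^2-1)}{6}$, not $\tfrac{b^2-1}{6}$, although after multiplying by $b^{n-1}$ your stated per-index contribution $\pm\tfrac{b^2-1}{12}b^n$ is the correct one.
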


\begin{proof}

For better readability we write $a$ instead of $a_n$. We can assume that $s_1 = t_1,\ldots,s_a = t_a, s_{a+1} = b - 1 - t_{a+1},\ldots,s_n = b - 1 - t_n$. Otherwise we would have to rename the $t_j$. This assumption allows us to split the sum in a compact way. So,
\begin{align*}
& \sum_{i,j=1}^n b^{i-j}t_i s_j = \sum_{i,j=1}^a b^{i-j} t_i t_j + \sum_{i=1}^a \sum_{j=a+1}^n b^{i-j} t_i (b - 1 - t_j) +\\
                              &\qquad\qquad + \sum_{i=a+1}^n \sum_{j=1}^a b^{i-j} t_i t_j + \sum_{i,j = a+1}^n b^{i-j} t_i (b - 1 - t_j)\\
                              & = \sum_{i,j=1}^a b^{i-j} t_i t_j + (b-1) \sum_{i=1}^a \sum_{j=a+1}^n b^{i-j} t_i - \sum_{i=1}^a \sum_{j=a+1}^n b^{i-j} t_i t_j +\\
                              &\qquad + \sum_{i=a+1}^n \sum_{j=1}^a b^{i-j} t_i t_j + (b-1)\sum_{i=a+1}^n \sum_{j=a+1}^n b^{i-j} t_i - \sum_{i=a+1}^n \sum_{j=a+1}^n b^{i-j} t_i t_j.
\end{align*}
Summing over $t_1,\ldots,t_n$ and analyzing every term separately will give us
\begin{align*}
& \sum_{t_1,\ldots,t_n} \sum_{i,j=1}^a b^{i-j} t_i t_j = b^{n-a} z_a,\\
\intertext{as well as using $y_n = b^{n+1} x_n$}
& \sum_{t_1,\ldots,t_n}(b-1) \sum_{i=1}^a \sum_{j=a+1}^n b^{i-j} t_i = (b-1) b^{n-a} y_a \sum_{j=a+1}^n b^{-j}\\
& \qquad = b^{n+1} x_a (b^{-a} - b^{-n}),\\
\intertext{and}
\sum_{t_1,\ldots,t_n}\sum_{i=1}^a \sum_{j=a+1}^n b^{i-j} t_i t_j & = \sum_{t_1,\ldots,t_a} \sum_{i=1}^a b^i t_i \sum_{t_{a+1},\ldots,t_n} \sum_{j=a+1}^n b^{-j} t_j\\
                                                                 & = y_a \sum_{t_{a+1},\ldots,t_n} b^{-a} \sum_{j=a+1}^n b^{a-j} t_j = x_a x_{n-a} b,
\end{align*}
since we have already seen that the indexes of $t_j$ are irrelevant. We also get with a similar argumentation
\begin{align*}
& \sum_{t_1,\ldots,t_n} \sum_{i=a+1}^n \sum_{j=1}^a b^{i-j} t_i t_j = \sum_{t_1,\ldots,t_a} \sum_{j=1}^a b^{-j} t_j \sum_{t_{a+1},\ldots,t_n} \sum_{i=a+1}^n b^i t_i\\
& = \sum_{t_1,\ldots,t_a} \sum_{j=1}^a b^{-j} t_j \sum_{t_{a+1},\ldots,t_n} b^a \sum_{i=a+1}^n b^{i-a} t_i = x_a b^a y_{n-a} = x_a x_{n-a} b^{n+1},\\
& \sum_{t_1,\ldots,t_n} (b-1) \sum_{i=a+1}^n \sum_{j=a+1}^n b^{i-j} t_i=(b-1) b^a \sum_{t_{a+1},\ldots,t_n} \sum_{i=a+1}^n b^i t_i \sum_{j=a+1}^n b^{-j}\\
& = b^a y_{n-a} b^a (b^{-a} - b^{-n}) = x_{n-a} (b^{n+1} - b^{a+1})\\
\intertext{and}
& \sum_{t_1,\ldots,t_n} \sum_{i=a+1}^n \sum_{j=a+1}^n b^{i-j} t_i t_j = b^a\sum_{t_{a+1},\ldots,t_n} \sum_{i=a+1}^n \sum_{j=a+1}^n b^{(i-a)+(a-j)} t_i t_j = b^a z_{n-a}.
\end{align*}
So what we have is
\begin{multline*}
\sum_{t_1,\ldots,t_n}^{b-1} \sum_{i,j=1}^n b^{i-j} t_i s_j\\
= b^{n-a} z_a - b^a z_{n-a} + x_a b(b^{n-a} - 1) + x_a x_{n-a} b (b^n-1) + x_{n-a} b^{a+1}(b^{n-a} - 1).
\end{multline*}

Inserting the values of $z_a,\,z_{n-a},\,x_a,$ and $x_{n-a}$ and simplifying will give us the stated assertion.

\end{proof}

\begin{prp} \label{prp_minus1}

Let $\mu_{jm\ell}$ be the $b$-adic Haar coefficients of the discrepancy function of $\Rn$. Then
\[ \mu_{(-1,-1),(0,0),(1,1)} = \frac{1}{4} b^{-2n} + \frac{1}{2} b^{-n} + (2a_n - n) \frac{b^2 - 1}{12} b^{-n - 1}. \]

\end{prp}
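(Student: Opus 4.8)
The plan is to use the linearity of the $b$-adic Haar coefficient $\mu_{jm\ell}$ in its argument together with the auxiliary lemmas already established. Since $\Rn$ consists of $N=b^n$ points, the discrepancy function is $D_{\Rn}(x)=b^{-n}\sum_{z\in\Rn}\1_{C_z}(x)-x_1x_2$, so that
\[ \mu_{(-1,-1),(0,0),(1,1)}(D_{\Rn})=b^{-n}\sum_{z\in\Rn}\mu_{(-1,-1),(0,0),(1,1)}(\1_{C_z})-\mu_{(-1,-1),(0,0),(1,1)}(x_1x_2). \]

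First I would read off the two Haar coefficients at the index $j=(-1,-1)$, $m=(0,0)$, $\ell=(1,1)$ from the lemmas. Lemma \ref{lem_haar_coeff_besov_x}(iv) gives that the coefficient of $x_1x_2$ equals $\tfrac14$, and Lemma \ref{lem_haar_coeff_besov_indicator}(iv) gives that the coefficient of $\1_{C_z}$ equals $(1-z_1)(1-z_2)$; here the supporting box $I_{(-1,-1),(0,0)}$ is all of $\Q^2$, so the vanishing clause of the latter lemma never applies. Hence
\[ \mu_{(-1,-1),(0,0),(1,1)}(D_{\Rn})=b^{-n}\sum_{z\in\Rn}(1-z_1)(1-z_2)-\tfrac14. \]

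Next I would evaluate the sum $\sum_{z\in\Rn}(1-z_1)(1-z_2)$. By Lemma \ref{lem_sum_z} it equals $1+b^{-n-1}\sum_{t_1,\dots,t_n}\sum_{i,j=1}^n b^{i-j}t_is_j$, and by Lemma \ref{lem_a_ident} the inner double sum equals $\tfrac14 b^{2n+1}-\tfrac12 b^{n+1}+\tfrac14 b+(2a_n-n)\tfrac{b^2-1}{12}b^n$. Substituting, multiplying this by $b^{-n-1}$, and adding the leading $1$ yields
\[ \sum_{z\in\Rn}(1-z_1)(1-z_2)=\tfrac12+\tfrac14 b^n+\tfrac14 b^{-n}+(2a_n-n)\tfrac{b^2-1}{12}b^{-1}. \]
Multiplying by $b^{-n}$ (so that the $\tfrac14 b^n$ term becomes the constant $\tfrac14$) and subtracting $\tfrac14$ then gives exactly the claimed expression.

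Since all three auxiliary lemmas are already proved, there is no genuine obstacle; the only thing requiring attention is the bookkeeping of the powers of $b$ in the last two displayed identities — in particular the cancellation of the constants $1$ and $-\tfrac12$ (the latter coming from the term $-\tfrac12 b^{n+1}$) into $\tfrac12$, and of $\tfrac14$ against $b^{-n}\cdot\tfrac14 b^n$ at the very end — all of which is routine.
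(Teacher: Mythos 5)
Your proposal is correct and follows essentially the same route as the paper: linearity of the Haar coefficient applied to $D_{\Rn}$, the values $\tfrac14$ and $(1-z_1)(1-z_2)$ from Lemmas \ref{lem_haar_coeff_besov_x}(iv) and \ref{lem_haar_coeff_besov_indicator}(iv), and then Lemmas \ref{lem_sum_z} and \ref{lem_a_ident} with the same power-of-$b$ bookkeeping. The intermediate identity $\sum_{z\in\Rn}(1-z_1)(1-z_2)=\tfrac12+\tfrac14 b^{n}+\tfrac14 b^{-n}+(2a_n-n)\tfrac{b^2-1}{12}b^{-1}$ checks out, so nothing is missing.
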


\begin{proof}

Using the last lemma we have
\[ \sum_{t_1,\ldots,t_n = 0}^{b-1} \sum_{i,j=1}^n b^{i-j} t_i s_j = \frac{1}{4} b^{2n+1} - \frac{1}{2} b^{n+1} + \frac{1}{4} b + (2a_n - n) \frac{b^2 - 1}{12} b^n. \]
Hence using Lemmas \ref{lem_haar_coeff_besov_x}, \ref{lem_haar_coeff_besov_indicator} and \ref{lem_sum_z}
\begin{align*}
& \mu_{(-1,-1),(0,0),(1,1)} = b^{-n} \sum_{z \in \Rn} (1 - z_1) (1 - z_2) - \frac{1}{4}\\
                          & = b^{-n} \left( 1 + b^{-n - 1} \left( \frac{1}{4} b^{2n+1} - \frac{1}{2} b^{n+1} + \frac{1}{4} b + (2a_n - n) \, \frac{b^2 - 1}{12} \, b^n \right) \right) - \frac{1}{4}\\
                          & = \frac{1}{4} b^{-2n} + \frac{1}{2} b^{-n} + (2a_n - n) \, \frac{b^2 - 1}{12} \, b^{-n - 1}.
\end{align*}

\end{proof}

\begin{lem} \label{lem_middle}

Let $j = (j_1,-1)$ for $j_1 \in \N_0$ with $j_1 \leq n-1$, $m = (m_1,0)$ with $0 \leq m_1 < b^{j_1}$ and $\ell = (\ell_1,1)$ with $1 \leq \ell_1 < b$. Then
\begin{multline*}
\sum_{z \in \Rn \cap I_{jm}} \left[ (bm_1 + k_1 + 1 - b^{j_1 + 1} z_1) e^{\frac{2\pi i}{b} k_1 \ell_1} + \sum_{r_1 = k_1 + 1}^{b-1} e^{\frac{2\pi i}{b} r_1 \ell_1} \right] (1 - z_2)\\
= \frac{b^{n-j_1}(1 - 2\varepsilon) \mp b^{j_1-n+1}}{2(e^{\frac{2\pi i}{b} \ell_1} - 1)} + \frac{w_{j_1}}{(e^{\frac{2\pi i}{b} \ell_1} - 1)^2},
\end{multline*}
where $w_{j_1}$ is either $e^{\frac{2\pi i}{b} \ell_1}$ or $-1$, the sign of $\mp$ depends on $j_1$ and we have $\varepsilon b^{n - j_1} \leq b$.

An analogous result holds for $j = (-1,j_2)$ where $j_2 \in \N_0$ with $j_2 \leq n - 1$, $m = (0,m_2)$ with $0 \leq m_2 < b^{j_2}$ and $\ell = (1,\ell_2)$ with $1 \leq \ell_2 < b$.
\end{lem}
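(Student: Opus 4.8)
\noindent\emph{Proof plan.} The sum in question is, up to the factor $b^{-j_1-1}b^{-n}$ and the subtraction of the coefficient from Lemma~\ref{lem_haar_coeff_besov_x}(ii), exactly the Haar coefficient $\mu_{(j_1,-1),(m_1,0),(\ell_1,1)}$ of the discrepancy function (use Lemmas~\ref{lem_haar_coeff_besov_x} and~\ref{lem_haar_coeff_besov_indicator}), so I would prove it by the same scheme as Lemma~\ref{lem_coeff_calc}: parametrise the points of $\Rn\cap I_{jm}$ by their digits, express both factors of the summand in these digits, multiply out, discard the terms that vanish, and evaluate the survivors via Lemmas~\ref{lem_factor5} and~\ref{lem_xn}. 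Write $E:=e^{\frac{2\pi i}{b}\ell_1}$, so $E\neq1$.

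First I would set up the parametrisation. Since the second coordinate of $I_{jm}$ is all of $[0,1)$, a point $z=(z_1,z_2)\in\Rn$ lies in $I_{jm}$ (resp.\ in $I_{jm}^{k_1}$) iff $z_1=\frac{t_n}{b}+\dots+\frac{t_1}{b^n}$ lies in $I_{j_1m_1}$ (resp.\ in $I_{j_1m_1}^{k_1}$). Expanding $m_1$ in base $b$ exactly as in Lemma~\ref{lem_coeff_calc} shows that $t_{n-j_1+1},\dots,t_n$ are prescribed by $m_1$, that $t_{n-j_1}=k_1$, and that $t_1,\dots,t_{n-j_1-1}$ are free. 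The computation of Lemma~\ref{lem_coeff_calc} then gives $A:=bm_1+k_1+1-b^{j_1+1}z_1=1-b^{j_1-n}\sum_{i=1}^{n-j_1-1}b^i t_i$, which is independent of $k_1$, while $1-z_2=1-\sum_{i=1}^{n}b^{-i}s_i$ splits as $1-z_2 = C_0 - b^{-(n-j_1)}s_{n-j_1} - \sum_{i=1}^{n-j_1-1}b^{-i}s_i$, where $C_0:=1-\sum_{i=n-j_1+1}^{n}b^{-i}s_i$ is a constant and $s_{n-j_1}$ is $k_1$ or $b-1-k_1$ according to whether the map $s_{n-j_1}$ is the identity or the reflection — a choice that depends only on the index $n-j_1$, hence on $j_1$.

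Next I would expand. Writing $\sum_{z\in\Rn\cap I_{jm}}=\sum_{k_1=0}^{b-1}\sum_{t_1,\dots,t_{n-j_1-1}}$ and multiplying $\big[Ae^{\frac{2\pi i}{b}k_1\ell_1}+\sum_{r_1=k_1+1}^{b-1}e^{\frac{2\pi i}{b}r_1\ell_1}\big]$ against the three‑term form of $1-z_2$ produces six terms; the two in which a bare $\sum_{k_1=0}^{b-1}e^{\frac{2\pi i}{b}k_1\ell_1}=0$ appears drop out. The remaining four are evaluated using Lemma~\ref{lem_factor5}, the identity $\sum_{k=0}^{b-1}k\sum_{r=k+1}^{b-1}e^{\frac{2\pi i}{b}r\ell_1}=\frac{b(b-1)}{2(E-1)}-\frac{bE}{(E-1)^2}$ (a one‑line consequence of Lemma~\ref{lem_factor5} via $\sum_{r=k+1}^{b-1}E^r=\frac{1-E^{k+1}}{E-1}$), the relation $\sum_{k_1=0}^{b-1}s_{n-j_1}E^{k_1}=\pm\frac{b}{E-1}$ from \eqref{sign_term}, and the sums over the free digits $\sum_{t_1,\dots,t_{n-j_1-1}}A=\frac12(b^{n-j_1-1}+1)$ and $\sum_{t_1,\dots,t_{n-j_1-1}}\sum_{i=1}^{n-j_1-1}b^{-i}s_i=\frac12(b^{n-j_1-1}-1)$, both immediate from Lemma~\ref{lem_xn} (the indices of the digits are irrelevant, and $\sum_t s_i(t)=\sum_t t$ for either choice of $s_i$).

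Finally I would assemble. Adding the four contributions with all signs tracked, the $(E-1)^{-2}$‑part comes out $\pm\frac{E}{(E-1)^2}$ and the $(E-1)^{-1}$‑part has numerator $b^{n-j_1}(2C_0-1)\mp b^{j_1-n+1}$, except that in the reflection case an additional $+2$ appears there; it is absorbed via $-\frac{E}{(E-1)^2}=-\frac{1}{(E-1)^2}-\frac{1}{E-1}$, which yields $w_{j_1}=-1$, whereas in the identity case $w_{j_1}=E$ directly. Setting $\varepsilon:=1-C_0=\sum_{i=n-j_1+1}^{n}b^{-i}s_i\ge0$ turns $2C_0-1$ into $1-2\varepsilon$, and $\varepsilon b^{n-j_1}=b^{-1}s_{n-j_1+1}+\dots+b^{-j_1}s_n\le(b-1)(b^{-1}+\dots+b^{-j_1})<1\le b$ is the claimed estimate; the boundary case $j_1=n-1$ (no free digit) is consistent with every formula above. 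The statement for $j=(-1,j_2)$ follows by the identical argument with the roles of the digits $t_i$ and $s_i$ interchanged (now $s_1,\dots,s_{j_2}$ are fixed by $m_2$, $s_{j_2+1}=k_2$, and $s_{j_2+2},\dots,s_n$ are free). The only genuine work is the bookkeeping of the four surviving terms together with the identity‑versus‑reflection signs, and spotting the rearrangement of $-E/(E-1)^2$ that puts the answer into the stated shape.
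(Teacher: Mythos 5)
Your proposal is correct and follows essentially the same route as the paper's proof: the same digit parametrisation of $\Rn\cap I_{jm}$, the same six-term expansion with the two root-of-unity terms vanishing, evaluation of the survivors via Lemma~\ref{lem_factor5} and \eqref{sign_term}, and the same final absorption of the leftover $\frac{1}{e^{2\pi i\ell_1/b}-1}$ into the $(e^{2\pi i\ell_1/b}-1)^{-2}$ term to produce $w_{j_1}$. The only cosmetic difference is that you sum the free digits directly (via Lemma~\ref{lem_xn}) and keep the constant $C_0=1-\varepsilon$ together, while the paper encodes the free digits through the permutation $\sigma(h)$ and keeps $\varepsilon$ separate; your alternative form of the weighted sum $\sum_k k\sum_{r>k}e^{2\pi i r\ell_1/b}$ is algebraically identical to the paper's and merely shifts the extra $+2$ to the reflection case, so your bookkeeping is consistent and the stated result follows.
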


\begin{proof}

Let $z \in \Rn\cap I_{jm}$. Then there is a $k = (k_1,-1)$, $k_1 \in \{ 0, 1, \ldots, b - 1 \}$ such that, $z \in \Rn \cap I_{jm}^k$. We use the methods from from Lemma \ref{lem_coeff_calc} for the proof. We have
\[ bm_1 + k_1 + 1 - b^{j_1 + 1} z_1 = 1 - b^{-1} t_{n - j_1 - 1} - \ldots - b^{j_1 - n + 1} t_1 \]
which means that
\[ bm_1 + k_1 + 1 - b^{j_1 + 1} z_1 = h b^{j_1 - n + 1} \]
for $h = 1, 2, \ldots, b^{n - j_1 - 1}$. The numbers $t_{n - j_1 + 1},\ldots,t_n$ are determined by the condition $z \in \Rn \cap I_{jm}$ and $t_{n - j_1} = k_1$. All other $t_j$ can be chosen arbitrarily. We also have
\[ 1 - z_2 = 1 - b^{-1} s_1 - \ldots - b^{j_1 - n + 1} s_{n - j_1 - 1} - b^{j_1 - n} s_{n - j_1} - \varepsilon \]
where $\varepsilon = b^{j_1 - n - 1} s_{n - j_1 + 1} + \ldots + b^{-n} s_n$. Clearly, $\varepsilon b^{n - j_1} \leq b$.

So there must be a permutation $\sigma$ such that
\[ 1 - z_2 = \sigma(h) b^{j_1 - n + 1} - b^{j_1 - n} s_{n - j_1} - \varepsilon. \]
Hence
\begin{align*}
& \sum_{z \in \Rn \cap I_{jm}} \left[ (bm_1 + k_1 + 1 - b^{j_1 + 1} z_1) e^{\frac{2\pi i}{b} k_1 \ell_1} + \sum_{r_1 = k_1 + 1}^{b-1} e^{\frac{2\pi i}{b} r_1 \ell_1} \right] (1 - z_2)\\
& = \sum_{k_1 = 0}^{b-1} \sum_{h=1}^{b^{n - j_1 - 1}} \left[hb^{j_1 - n + 1} e^{\frac{2\pi i}{b} k_1 \ell_1} + \sum_{r_1 = k_1 + 1}^{b-1} e^{\frac{2\pi i}{b} r_1 \ell_1} \right] (\sigma(h) b^{j_1 - n + 1} - b^{j_1 - n} s_{n - j_1} - \varepsilon)\\
\end{align*}

We analyze the summands separately after having expanded the product and changed the order of summation. A longer though straightforward calculation can be found in \cite{M12}. We have
\begin{align*}
 & \sum_{h=1}^{b^{n - j_1 - 1}} h \sigma(h) b^{j_1 - n + 1} b^{j_1 - n + 1} \sum_{k_1 = 0}^{b-1} e^{\frac{2\pi i}{b} k_1 \ell_1} = 0,\\
-& \sum_{h=1}^{b^{n - j_1 - 1}} h b^{j_1 - n + 1} b^{j_1 - n} \sum_{k_1 = 0}^{b-1} s_{n - j_1} e^{\frac{2\pi i}{b} k_1 \ell_1} = \mp \frac{b^{j_1 - n + 1} + 1}{2(e^{\frac{2\pi i}{b} \ell_1} - 1)},\\
\intertext{using the equation \eqref{sign_term} from the proof for Lemma \ref{lem_coeff_calc},}\\
-& \varepsilon \sum_{h=1}^{b^{n - j_1 - 1}} h b^{j_1 - n + 1} \sum_{k_1 = 0}^{b-1} e^{\frac{2\pi i}{b} k_1 \ell_1} = 0,\\
 & \sum_{h=1}^{b^{n - j_1 - 1}} \sigma(h) b^{j_1 - n + 1} \sum_{k_1 = 0}^{b-1} \sum_{r_1 = k_1 + 1}^{b-1} e^{\frac{2\pi i}{b} r_1 \ell_1} = \frac{b^{n - j_1} + b}{2(e^{\frac{2\pi i}{b} \ell_1} - 1)},\\
-& \varepsilon \sum_{h=1}^{b^{n - j_1 - 1}} \sum_{k_1 = 0}^{b-1} \sum_{r_1 = k_1 + 1}^{b-1} e^{\frac{2\pi i}{b} r_1 \ell_1} = \frac{-\varepsilon b^{n - j_1}}{e^{\frac{2\pi i}{b} \ell_1} - 1}.\\
\intertext{and}
-& \sum_{h=1}^{b^{n - j_1 - 1}} b^{j_1 - n} \sum_{k_1 = 0}^{b-1} s_{n - j_1} \sum_{r_1 = k_1 + 1}^{b-1} e^{\frac{2\pi i}{b} r_1 \ell_1}\\
\end{align*}

For the last term we use the fact that $s_{n - j_1}$ is either $k_1$ or $b - 1 - k_1$. In the first case we have
\begin{align*}
& \sum_{k_1 = 0}^{b-1} k_1 \sum_{r_1 = k_1 + 1}^{b-1} e^{\frac{2\pi i}{b} r_1 \ell_1}\\
& = \sum_{k_1 = 1}^{b-2} k_1 \frac{1 - e^{\frac{2\pi i}{b} (k_1 + 1) \ell_1}}{e^{\frac{2\pi i}{b} \ell_1} - 1}\\
& = \frac{1}{e^{\frac{2\pi i}{b} \ell_1} - 1} \left( \frac{1}{2} (b-2) (b-1) - \sum_{k_1 = 2}^{b-1} (k_1 - 1)e^{\frac{2\pi i}{b} k_1 \ell_1} \right)\\
& = \frac{1}{e^{\frac{2\pi i}{b} \ell_1} - 1} \left( \frac{1}{2} (b-2) (b-1) - \left( \frac{b}{e^{\frac{2\pi i}{b} \ell_1} - 1} - e^{\frac{2\pi i}{b} \ell_1} \right) + \left( 0 - 1 - e^{\frac{2\pi i}{b} \ell_1} \right) \right)\\
& = \frac{1}{e^{\frac{2\pi i}{b} \ell_1} - 1} \left( \frac{b^2 - 3b}{2} - \frac{b}{e^{\frac{2\pi i}{b} \ell_1} - 1} \right)\\
& = \frac{(b-3) b}{2(e^{\frac{2\pi i}{b} \ell_1} - 1)} - \frac{b}{(e^{\frac{2\pi i}{b} \ell_1} - 1)^2}.
\end{align*}
In the other case we have
\begin{align*}
& \sum_{k_1 = 0}^{b-1} (b - 1 - k_1) \sum_{r_1 = k_1 + 1}^{b-1} e^{\frac{2\pi i}{b} r_1 \ell_1}\\
& = (b-1) \sum_{k_1 = 0}^{b-1} \sum_{r_1 = k_1 + 1}^{b-1}e^{\frac{2\pi i}{b} r_1 \ell_1} - \sum_{k_1 = 0}^{b-1} k_1 \sum_{r_1 = k_1 + 1}^{b-1} e^{\frac{2\pi i}{b} r_1 \ell_1}\\
& = \frac{(b-1)b}{(e^{\frac{2\pi i}{b} \ell_1} - 1)} - \frac{(b-3)b}{2(e^{\frac{2\pi i}{b} \ell_1} - 1)} + \frac{b}{(e^{\frac{2\pi i}{b} \ell_1} - 1)^2}\\
& = \frac{b(b+1)}{2(e^{\frac{2\pi i}{b} \ell_1} - 1)} + \frac{b}{(e^{\frac{2\pi i}{b} \ell_1} - 1)^2}.
\end{align*}
So the last term is either
\[ \frac{1}{(e^{\frac{2\pi i}{b} \ell_1} - 1)^2} - \frac{b-3}{2(e^{\frac{2\pi i}{b} \ell_1} - 1)} \]
or
\[ -\frac{b+1}{2(e^{\frac{2\pi i}{b} \ell_1} - 1)} - \frac{1}{(e^{\frac{2\pi i}{b} \ell_1} - 1)^2}. \]
Now combining the results we get in the case $s_{n - j_1} = k_1$
\begin{align*}
& \sum_{z \in \Rn \cap I_{jm}} \left[ (bm_1 + k_1 + 1 - b^{j_1 + 1} z_1) e^{\frac{2\pi i}{b} k_1 \ell_1} + \sum_{r_1 = k_1 + 1}^{b-1} e^{\frac{2\pi i}{b} r_1 \ell_1} \right] (1 - z_2)\\
& = \frac{b^{n-j_1}(1 - 2\varepsilon) - b^{j_1-n+1}}{2(e^{\frac{2\pi i}{b} \ell_1} - 1)} + \frac{e^{\frac{2\pi i}{b} \ell_1}}{(e^{\frac{2\pi i}{b} \ell_1} - 1)^2}
\end{align*}
while in the case $s_{n - j_1} = b - 1 - k_1$
\begin{align*}
& \sum_{z \in \Rn \cap I_{jm}} \left[ (bm_1 + k_1 + 1 - b^{j_1 + 1} z_1) e^{\frac{2\pi i}{b} k_1 \ell_1} + \sum_{r_1 = k_1 + 1}^{b-1} e^{\frac{2\pi i}{b} r_1 \ell_1} \right] (1 - z_2)\\
& = \frac{b^{n-j_1}(1 - 2\varepsilon) + b^{j_1-n+1}}{2(e^{\frac{2\pi i}{b} \ell_1} - 1)} - \frac{1}{(e^{\frac{2\pi i}{b} \ell_1} - 1)^2}
\end{align*}
as stated by the lemma.

\end{proof}

\section{Proof of the main result}

\begin{prp} \label{prp_haar_coeff}

Let $\Rn$ be a generalized Hammersley type point set and let $\mu_{jm\ell}$ be the $b$-adic Haar coefficient of the discrepancy function of $\Rn$ for $j \in \N_{-1}^2, \, m \in \D_j$ and $\ell \in \B_j$. Then
\begin{enumerate}[(i)]
	\item if $j \in \N_0^2$ and $j_1 + j_2 < n-1$ then
\[ \left| \mu_{jm\ell} \right| = \frac{b^{-2n}}{\left|e^{\frac{2\pi i}{b} \ell_1} - 1\right|\left|e^{\frac{2\pi i}{b} \ell_2} - 1\right|}, \] \label{prp_1}
	\item if $j \in \N_0^2$, $j_1 + j_2 \geq n-1$ and $j_1,j_2 \leq n$ then $\left| \mu_{jm\ell} \right| \leq c b^{-n - j_1 - j_2}$ for some constant $c > 0$ and
\[ \left| \mu_{jm\ell} \right| = \frac{b^{-2j_1 - 2j_2 - 2}}{\left|e^{\frac{2\pi i}{b} \ell_1} - 1\right|\left|e^{\frac{2\pi i}{b} \ell_2} - 1\right|} \]
for all but $b^n$ coefficients $\mu_{jm\ell}$, \label{prp_2}
	\item if $j \in \N_0^2$ and $j_1 \geq n$ or $j_2 \geq n$ then
\[ \left| \mu_{jm\ell} \right| = \frac{b^{-2j_1 - 2j_2 - 2}}{\left|e^{\frac{2\pi i}{b} \ell_1} - 1\right|\left|e^{\frac{2\pi i}{b} \ell_2} - 1\right|}, \] \label{prp_3}
	\item if $j = (j_1,-1)$ with $j_1 \in \N_0$ and $j_1 < n$ then $\left| \mu_{jm\ell} \right| \leq c b^{-n-j_1}$ for some constant $c > 0$, \label{prp_4}
	\item if $j = (-1,j_2)$ with $j_2 \in \N_0$ and $j_2 < n$ then $\left| \mu_{jm\ell} \right| \leq c b^{-n-j_2}$ for some constant $c > 0$, \label{prp_4a}
	\item if $j = (j_1,-1)$ with $j_1 \in \N_0$ and $j_1 \geq n$ then
\[ \left| \mu_{jm\ell} \right| = \frac{1}{2}\frac{b^{-2j_1 - 1}}{\left|e^{\frac{2\pi i}{b} \ell_1} - 1 \right|}, \] \label{prp_5}
  \item if $j = (-1,j_2)$ with $j_2 \in \N_0$ and $j_2 \geq n$ then
\[ \left| \mu_{jm\ell} \right| = \frac{1}{2}\frac{b^{-2j_2 - 1}}{\left|e^{\frac{2\pi i}{b} \ell_2} - 1 \right|}, \] \label{prp_5a}
	\item $\left| \mu_{(-1,-1),(0,0),(1,1)} \right| = | \frac{1}{4} b^{-2n} + \left( \frac{1}{2} + (2a_n - n) \frac{b - b^{-1}}{12} \right) b^{-n} |.$
\end{enumerate}

\end{prp}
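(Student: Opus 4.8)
The plan is to assemble the coefficients from the lemmas of the previous section. Write $N=b^n$; since the discrepancy function of $\Rn$ equals $D_{\Rn}=b^{-n}\sum_{z\in\Rn}\1_{C_z}-x_1x_2$ and $f\mapsto\mu_{jm\ell}(f)$ is linear, I would begin from
\[ \mu_{jm\ell}=b^{-n}\sum_{z\in\Rn}\mu_{jm\ell}(\1_{C_z})-\mu_{jm\ell}(x_1x_2), \]
substitute the exact values of $\mu_{jm\ell}(x_1x_2)$ from Lemma \ref{lem_haar_coeff_besov_x}, and observe (Lemma \ref{lem_haar_coeff_besov_indicator}) that $\mu_{jm\ell}(\1_{C_z})=0$ unless $z$ lies in the interior of $I_{jm}$, so that the sum over $\Rn$ is really a sum over $\Rn\cap I_{jm}$ whose summands are given by the matching case of Lemma \ref{lem_haar_coeff_besov_indicator}. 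The rest is a case analysis according to the location of $j$, in each case either evaluating the finite sum exactly (via Lemma \ref{lem_coeff_calc}, Lemma \ref{lem_middle} or Proposition \ref{prp_minus1}) or estimating it crudely.

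For part (i) ($j\in\N_0^2$, $j_1+j_2<n-1$) I would feed Lemma \ref{lem_haar_coeff_besov_indicator}(i) and then Lemma \ref{lem_coeff_calc} into the sum; multiplying by $b^{-n}b^{-j_1-j_2-2}$ produces a term $b^{-2j_1-2j_2-2}/((e^{2\pi i\ell_1/b}-1)(e^{2\pi i\ell_2/b}-1))$ that cancels exactly against $\mu_{jm\ell}(x_1x_2)$ from Lemma \ref{lem_haar_coeff_besov_x}(i), so that $\mu_{jm\ell}=\pm b^{-2n}/((e^{2\pi i\ell_1/b}-1)(e^{2\pi i\ell_2/b}-1))$ and the modulus is as claimed. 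Parts (iv) and (v) ($j=(j_1,-1)$ or $(-1,j_2)$ with the non-trivial coordinate $<n$) I would treat identically, using Lemma \ref{lem_middle} and Lemma \ref{lem_haar_coeff_besov_x}(ii)/(iii): the leading term $\tfrac12 b^{-2j_1-1}/(e^{2\pi i\ell_1/b}-1)$ cancels against $\mu_{jm\ell}(x_1x_2)$, leaving a sum of boundedly many terms each of modulus at most $c\,b^{-n-j_1}$; here I would use $\varepsilon b^{n-j_1}\le b$ to control the $\varepsilon$-term, $b^{-2n}\le b^{-n-j_1}$ for the $\mp$-term, and $|e^{2\pi i\ell_1/b}-1|\ge 2\sin(\pi/b)$ throughout.

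For parts (iii), (vi) and (vii) ($j_1\ge n$ or $j_2\ge n$) I would argue that every first coordinate of a point of $\Rn$ has the form $k/b^n$ with $k\in\{0,\dots,b^n-1\}$, hence equals $(kb^{j_1-n})/b^{j_1}$ once $j_1\ge n$, i.e. is the left endpoint of $I_{j_1,kb^{j_1-n}}$ and lies in the interior of no interval $I_{j_1m_1}$; the same holds for second coordinates when $j_2\ge n$. Therefore $\mu_{jm\ell}(\1_{C_z})=0$ for every $z\in\Rn$, so $\mu_{jm\ell}=-\mu_{jm\ell}(x_1x_2)$ and Lemma \ref{lem_haar_coeff_besov_x} yields the moduli. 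For part (ii) ($j\in\N_0^2$, $j_1+j_2\ge n-1$, $j_1,j_2\le n$) I would note that $I_{jm}$ is contained in a larger $b$-adic box to which the digit-by-digit count from the proof of Lemma \ref{lem_coeff_calc} applies, so $I_{jm}$ holds at most $b^2$ points of $\Rn$; with the crude estimate $|\mu_{jm\ell}(\1_{C_z})|\le b^{-j_1-j_2-2}\cdot b^2$ and with $|\mu_{jm\ell}(x_1x_2)|\le c\,b^{-2j_1-2j_2-2}\le c\,b^{-n-j_1-j_2}$ (using $j_1+j_2\ge n-1$) this gives $|\mu_{jm\ell}|\le c\,b^{-n-j_1-j_2}$. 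Moreover, for a fixed $j$ the disjoint boxes $I_{jm}$ can meet $\Rn$ for at most $b^n$ indices $m$; for every other $m$ all indicator coefficients vanish, so $\mu_{jm\ell}=-\mu_{jm\ell}(x_1x_2)$, giving the stated closed form for all but $\le b^n$ of the coefficients. The last identity is just Proposition \ref{prp_minus1} after rewriting $\frac{b^2-1}{12}b^{-n-1}=\frac{b-b^{-1}}{12}b^{-n}$.

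The part I expect to be most delicate is (ii): one has to make the containment argument for the point count precise and to check that the boundary values $j_1=n$, $j_2=n$ are counted in (iii) (where the sharp formula holds for every coefficient) rather than in (ii). Once Lemmas \ref{lem_coeff_calc} and \ref{lem_middle} and Proposition \ref{prp_minus1} are in hand, the remaining work — the exact cancellations in (i), (iv), (v) and the endpoint observation underlying (iii), (vi), (vii) — is routine.
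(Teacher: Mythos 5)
Your proposal is correct and follows essentially the same route as the paper: split $\mu_{jm\ell}$ linearly into the indicator and $x_1x_2$ parts, use Lemmas \ref{lem_haar_coeff_besov_x}--\ref{lem_haar_coeff_besov_indicator} together with Lemma \ref{lem_coeff_calc} for (i), Lemma \ref{lem_middle} for (iv)--(v), Proposition \ref{prp_minus1} for the last item, the observation that no point of $\Rn$ lies in the interior of $I_{jm}$ when $j_1\geq n$ or $j_2\geq n$ for (iii), (vi), (vii), and a point-count plus crude bracket bound for (ii). Your justification of the count in (ii) (embedding $I_{jm}$ in a larger $b$-adic box containing at most $b^2$ points) and of the "no interior point" claim are just more explicit versions of the paper's terse remarks, not a different argument.
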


\begin{proof}

Let $j \in \N_{-1}^2$ such that $j_1 \geq n$ or $j_2 \geq n$. Then there is no point of $\Rn$ which is contained in the interior of the $b$-adic box $I_{jm}$. Thereby \eqref{prp_3}, \eqref{prp_5} and \eqref{prp_5a} follow from Lemma \ref{lem_haar_coeff_besov_x} and Lemma \ref{lem_haar_coeff_besov_indicator}.

The set $\Rn$ contains $N = b^n$ points and, for fixed $j \in \N_{-1}^2$, the interiors of the $b$-adic boxes $I_{jm}$ are mutually disjoint. Therefore there are no more than $b^n$ $b$-adic boxes which contain a point of $\Rn$. This gives us the second part of \eqref{prp_2}. The first part of \eqref{prp_2} follows from Lemma \ref{lem_haar_coeff_besov_x} and Lemma \ref{lem_haar_coeff_besov_indicator} because the remaining boxes contain exactly one point of $\Rn$.

The part \eqref{prp_1} follows from Lemmas \ref{lem_haar_coeff_besov_x}, \ref{lem_haar_coeff_besov_indicator} and \ref{lem_coeff_calc}.

The last part is actually Proposition \ref{prp_minus1}.

Finally \eqref{prp_4} (and analogously \eqref{prp_4a}) follows from Lemma \ref{lem_middle} combined with Lemma \ref{lem_haar_coeff_besov_x} and Lemma \ref{lem_haar_coeff_besov_indicator}. We get
\[ \left| \mu_{jm\ell} \right| = \left| \frac{b^{-n-j_1-1}(w_{j_1} - \varepsilon \, b^{n-j_1} \, (e^{\frac{2\pi i}{b} \ell_1} - 1))}{(e^{\frac{2\pi i}{b} \ell_1} - 1)^2} \pm \frac{b^{-2n}}{2(e^{\frac{2\pi i}{b} \ell_1} - 1)} \right| \]
where $w_{j_1}$ is either $e^{\frac{2\pi i}{b} \ell_1}$ or $-1$. Clearly,
\[ \left| w_{j_1} - \varepsilon \, b^{n-j_1} \, (e^{\frac{2\pi i}{b} \ell_1} - 1) \right| \leq c. \]
for some constant $c > 0$ since $\varepsilon b^{n-j_1} \leq b$. Hence
\[ \left| \mu_{jm\ell} \right| \leq \bar{c} \, b^{-n-j_1}. \]
\end{proof}
Now we are ready to prove the main result.

\begin{proof}[Proof of Theorem \ref{thm_hammersley_disc}]

Let $\Rn$ be a generalized Hammersley type point set with $a_n$ satisfying $|2a_n - n| \leq c_0$ for some constant $c_0 \geq 0$. Let $\mu_{jm\ell}$ be the $b$-adic Haar coefficients of the discrepancy function of $\Rn$. Theorem \ref{thm_besov_char} gave us an equivalent quasi-norm on $S_{pq}^r B(\Q^2)$ so that the proof of the inequality
\[ \left( \sum_{j\in\N_{-1}^2} b^{(j_1 + j_2)(r - \frac{1}{p} + 1) q} \left( \sum_{m\in\D_j,\,\ell\in\B_j} |\mu_{jm\ell}|^p \right)^{\frac{q}{p}} \right)^{\frac{1}{q}} \leq C \, b^{n(r-1)} n^{\frac{1}{q}} \]
for some constant $C > 0$ establishes the proof of the theorem.

We use different parts of Proposition \ref{prp_haar_coeff} after having split the sum by Minkowski's inequality. We have
\begin{align*}
& \left( \sum_{j\in\N_0^2; \, j_1 + j_2 < n-1} b^{(j_1 + j_2)(r - \frac{1}{p} + 1) q} \left( \sum_{m \in \D_j, \, \ell \in \B_j} | \mu_{jm\ell}|^p \right)^{\frac{q}{p}} \right)^{\frac{1}{q}}\\
& \leq c_1 \left( \sum_{j\in\N_0^2; \, j_1 + j_2 < n-1} b^{(j_1 + j_2)(r - \frac{1}{p} + 1) q} \left( \sum_{m \in \D_j} b^{-2np} \right)^{\frac{q}{p}} \right)^{\frac{1}{q}}\\
& = c_1 \left( \sum_{j\in\N_0^2; \, j_1 + j_2 < n-1} b^{\left[ (j_1 + j_2)(r + 1) - 2n \right] q} \right)^{\frac{1}{q}}\\
& = c_1 \left( \sum_{\lambda = 0}^{n-2} b^{\left[ \lambda (r+1) - 2n \right] q} (\lambda + 1) \right)^{\frac{1}{q}}\\
& \leq c_1 n^{\frac{1}{q}} \left( \sum_{\lambda = 0}^{n-2} b^{\left[ \lambda (r+1) - 2n \right] q} \right)^{\frac{1}{q}}\\
& \leq c_2 n^{\frac{1}{q}} b^{n (r-1)}
\end{align*}
from \eqref{prp_1}. From \eqref{prp_2} we have (using the fact that $\frac{1}{p} - r > 0$)
\begin{align*}
& \left( \sum_{0 \leq j_1, j_2 \leq n; \, j_1 + j_2 \geq n-1} b^{(j_1 + j_2)(r - \frac{1}{p} + 1)q} \left( \sum_{m \in \D_j, \, \ell \in \B_j} | \mu_{jm\ell}|^p \right)^{\frac{q}{p}} \right)^{\frac{1}{q}}\\
& \leq c_3 \left( \sum_{0 \leq j_1, j_2 \leq n; \, j_1 + j_2 \geq n-1} b^{(j_1 + j_2)(r - \frac{1}{p} + 1) q} \, b^{n \frac{q}{p}} \, b^{(-n - j_1 - j_2) q} \right)^{\frac{1}{q}}\\
& \quad + c_4 \left( \sum_{0 \leq j_1, j_2 \leq n; \, j_1 + j_2 \geq n-1} b^{(j_1 + j_2)(r - \frac{1}{p} + 1)q} \, b^{(j_1 + j_2)\frac{q}{p}} \, b^{(-2j_1 - 2j_2) q} \right)^{\frac{1}{q}}\\
& = c_3 \left( \sum_{0 \leq j_1, j_2 \leq n; \, j_1 + j_2 \geq n-1} b^{\left[ (j_1 + j_2) (r - \frac{1}{p}) + \frac{n}{p} - n \right] q} \right)^{\frac{1}{q}}\\
& \quad + c_4 \left( \sum_{0 \leq j_1, j_2 \leq n; \, j_1 + j_2 \geq n-1} b^{(j_1 + j_2)(r - 1)q} \right)^{\frac{1}{q}}\\
& = c_3 \left( \sum_{\lambda = n-1}^{2n} (2n - \lambda + 1) b^{\left[ \lambda(r - \frac{1}{p}) + \frac{n}{p} - n \right] q} \right)^{\frac{1}{q}}\\
& \quad + c_4 \left( \sum_{\lambda = n-1}^{2n} (2n - \lambda + 1) b^{\lambda (r-1) q} \right)^{\frac{1}{q}}\\
& = c_3 b^{\frac{n}{p} - n} \left( \sum_{\lambda = 1}^{n + 2} \lambda b^{\left[ (2n + 1 - \lambda)(r-\frac{1}{p}) \right] q} \right)^{\frac{1}{q}} + c_4 \left( \sum_{\lambda = 1}^{n+2} \lambda b^{(2n + 1 - \lambda) (r-1) q} \right)^{\frac{1}{q}}\\
& \leq c_5 b^{n(r-1) + n(r-\frac{1}{p})} \left( \sum_{\lambda = 1}^{n + 2} \lambda b^{\lambda (\frac{1}{p} - r) q} \right)^{\frac{1}{q}} + c_6 b^{2n (r-1)} \left( \sum_{\lambda = 1}^{n+2} \lambda b^{\lambda (1-r) q} \right)^{\frac{1}{q}}\\
& \leq c_5 b^{n(r-1) + n(r-\frac{1}{p})} (n + 2)^{\frac{1}{q}} \, b^{(n+3) (\frac{1}{p} - r)} + c_6 \, b^{2n (r-1)} (n + 2)^{\frac{1}{q}} \, b^{(n+3) (1 - r)}\\
& \leq c_7 \, b^{n(r-1)} \, n^{\frac{1}{q}}.
\end{align*}
Part \eqref{prp_3} gives us (using the fact that $r - 1 \leq 0$)
\begin{align*}
& \left( \sum_{j \in \N_0^2; \, j_1 \geq n} b^{(j_1 + j_2)(r - \frac{1}{p} + 1) q} \left( \sum_{m \in \D_j, \, \ell \in \B_j} | \mu_{jm\ell}|^p \right)^{\frac{q}{p}} \right)^{\frac{1}{q}}\\
& \leq c_8 \left( \sum_{j \in \N_0^2; \, j_1 \geq n} b^{(j_1 + j_2)(r - \frac{1}{p} + 1) q} \, b^{(-2j_1 - 2j_2)q} \, b^{(j_1 + j_2)\frac{q}{p}} \right)^{\frac{1}{q}}\\
& = c_8 \left( \sum_{\lambda = n}^\infty (\lambda + 1) b^{\lambda(r - 1) q} \right)^{\frac{1}{q}}\\
& \leq c_9 n^{\frac{1}{q}} b^{n(r-1)}
\end{align*}
and an analogous result for those $j \in \N_0^2$ with $j_2 \geq n$. From \eqref{prp_4} we conclude
\begin{align*}
& \left( \sum_{0 \leq j_1 < n; \, j_2 = -1} b^{(j_1 + j_2)(r - \frac{1}{p} + 1) q} \left( \sum_{m \in \D_j, \, \ell \in \B_j} | \mu_{jm\ell}|^p \right)^{\frac{q}{p}} \right)^{\frac{1}{q}}\\
& \leq c_{10} \left( \sum_{0 \leq j_1 < n; \, j_2 = -1} b^{(j_1 + j_2)(r - \frac{1}{p} + 1) q} \, b^{(j_1 + j_2) \frac{q}{p}} \, b^{(-n - j_1) q} \right)^{\frac{1}{q}}\\
& = c_{11} b^{-n} \left( \sum_{j_1 = 0}^{n-1} b^{j_1 q r} \right)^{\frac{1}{q}}\\
& \leq c_{11} b^{-n} b^{nr} = c_{11} b^{n(r-1)} \leq c_{11} b^{n(r-1)} n^{\frac{1}{q}}.
\end{align*}
Analogously one estimates the sum for those $j \in \N_{-1}^2$ with $j_1 = -1$ and $0 \leq j_2 < n$. From \eqref{prp_5} we have
\begin{align*}
& \left( \sum_{n \leq j_1; \, j_2 = -1} b^{(j_1 + j_2)(r - \frac{1}{p} + 1) q} \left( \sum_{m \in \D_j, \, \ell \in \B_j} | \mu_{jm\ell}|^p \right)^{\frac{q}{p}} \right)^{\frac{1}{q}}\\
& \leq c_{12} \left( \sum_{n \leq j_1; \, j_2 = -1} b^{(j_1 + j_2)(r - \frac{1}{p} + 1) q} \, b^{(j_1 + j_2) \frac{q}{p}} \, b^{-2j_1 q} \right)^{\frac{1}{q}}\\
& = c_{13} \left( \sum_{j_1 = n}^\infty b^{j_1 (r-1) q} \right)^{\frac{1}{q}}\\
& \leq c_{13} b^{n(r-1)} \leq c_{13} b^{n(r-1)} n^{\frac{1}{q}}
\end{align*}
again with analogous results for the sum with those $j \in \N_{-1}^2$ with $j_1 = -1$ and $n \leq j_2$. Finally, the last part gives us
\[ |\mu_{(-1,-1),(0,0),(1,1)}| \leq c_{14} b^{-n} \leq c_{14} b^{n(r-1)} n^{\frac{1}{q}}. \]
And the theorem is proved.

\end{proof}

\section{Final remarks}

The results from \cite[Chapter 6]{T10a} allow us to get additional results for Triebel-Lizorkin spaces with dominating mixed smoothness without any effort. First we define the spaces. We use the notation from the introduction. Let $0 < p,q \leq \infty$ and $r \in \R$. The Triebel-Lizorkin space with dominating mixed smoothness $S_{pq}^r F(\R^d)$ consists of all $f \in \mathcal{S}'(\R^d)$ with finite quasi-norm
\[ \left\| f | S_{pq}^r F(\R^d) \right\| = \left\| \left( \sum_{k \in \N_0^d} 2^{r (k_1 + \ldots + k_d) q} |\mathcal{F}^{-1}(\varphi_k \mathcal{F} f)(\cdot)|^q \right)^{\frac{1}{q}} | L_p(\R^d) \right\| \]
with the usual modification if $q = \infty$. The space $S_{pq}^r F(\Q^d)$ can be defined analogously to $S_{pq}^r B(\Q^d)$. In \cite{T10a} we find the following embeddings
\[ S_{p,\min(p,q)}^r B(\Q^d) \hookrightarrow S_{pq}^r F(\Q^d) \hookrightarrow S_{p,\max(p,q)}^r B(\Q^d) \]
and
\[ S_{p_1,q}^r F(\Q^d) \hookrightarrow S_{qq}^r B(\Q^d) \hookrightarrow S_{p_2,q}^r B(\Q^d) \]
for $0 < p_2 \leq q \leq p_1 < \infty$.
Using the main result of this note and these embeddings we get the following theorem

\begin{thm} \label{thm_hammersley_disc_f}

Let $1 \leq p,q \leq \infty$ and $0 \leq r < \frac{1}{\max(p,q)}$. Then for any integer $b\geq 2$ there are constants $c_1, c_2 > 0$ such that, for any $N \geq 2$, the discrepancy function of any point set $\mathcal{P}$ in $\Q^d$ with $N$ points satisfies
\[ \left\| D_{\mathcal{P}} | S_{pq}^r F(\Q^d) \right\| \geq c_1 \, N^{r-1} (\log N)^{\frac{d-1}{q}}, \]
and, for any $n \in \N$ and any generalized Hammersley type point set $\Rn$ with $a_n$ satisfying $|2a_n - n| \leq c_0$ for some constant $c_0 > 0$, we have
\[ \left\| D_{\Rn} | S_{pq}^r F(\Q^2) \right\| \leq c_2 \, b^{n(r-1)} \, n^{\frac{1}{q}}. \]

\end{thm}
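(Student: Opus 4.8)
The plan is to deduce both inequalities from the two embedding chains displayed above together with \eqref{triebel_res} and Theorem \ref{thm_hammersley_disc}, by wedging the Triebel--Lizorkin space $S_{pq}^rF$ between Besov spaces that all share the fine index $q$ (and hence the logarithmic power $(\log N)^{(d-1)/q}$, resp.\ the power $n^{1/q}$). Note first that the hypothesis $0\le r<\tfrac1{\max(p,q)}$ forces $1\le p,q<\infty$, so no genuine $\infty$-endpoints occur and the parameter $\tfrac1{\max(p,q)}$ is strictly positive.

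For the lower bound I would prove the single embedding
\[ S_{pq}^rF(\Q^d)\hookrightarrow S_{\min(p,q),q}^rB(\Q^d). \]
When $p\le q$ this is the right half of the first chain (then $\max(p,q)=q$ and $\min(p,q)=p$); when $p>q$ it is the left half of the second chain, used with $p_1=p$ (permissible since $p\ge q$), whose target $S_{qq}^rB$ equals $S_{\min(p,q),q}^rB$. Thus $\|D_{\mathcal P}\,|\,S_{\min(p,q),q}^rB(\Q^d)\|\le c\,\|D_{\mathcal P}\,|\,S_{pq}^rF(\Q^d)\|$ for every point set $\mathcal P$, and \eqref{triebel_res} with the parameters $(\min(p,q),q)$ gives $\|D_{\mathcal P}\,|\,S_{\min(p,q),q}^rB(\Q^d)\|\ge c_1N^{r-1}(\log N)^{(d-1)/q}$, since the restrictions $\tfrac1{\min(p,q)}-1<r<\tfrac1{\min(p,q)}$ there follow from $0\le r<\tfrac1{\max(p,q)}\le\tfrac1{\min(p,q)}$. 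Combining the two inequalities is the first assertion; the corner constellations ($r=0$ with $\min(p,q)=1$, and the side condition $q<\infty$ when $\min(p,q)=1$) are settled exactly as in \cite{T10a}.

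For the upper bound I would run the chain in reverse:
\[ S_{\max(p,q),q}^rB(\Q^2)\hookrightarrow S_{\max(p,q),q}^rF(\Q^2)\hookrightarrow S_{pq}^rF(\Q^2). \]
The first embedding is the left half of the first chain with $p$ replaced by $\max(p,q)$, which is allowed because $q\le\max(p,q)$ so $\min(\max(p,q),q)=q$; the second is the inclusion $L_{\max(p,q)}(\Q^2)\hookrightarrow L_p(\Q^2)$ (valid since $|\Q^2|=1$ and $p\le\max(p,q)$), read off from the intrinsic characterisation of $S_{pq}^rF(\Q^2)$ supplied by \cite[Chapter 6]{T10a}, the $F$-analogue of Theorem \ref{thm_besov_char}. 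Hence $\|D_{\Rn}\,|\,S_{pq}^rF(\Q^2)\|\le c\,\|D_{\Rn}\,|\,S_{\max(p,q),q}^rB(\Q^2)\|$, and Theorem \ref{thm_hammersley_disc} applied with the parameters $(\max(p,q),q)$ --- whose hypothesis is precisely $0\le r<\tfrac1{\max(p,q)}$, the one we assumed --- yields $\|D_{\Rn}\,|\,S_{\max(p,q),q}^rB(\Q^2)\|\le c_2\,b^{n(r-1)}n^{1/q}$.

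The argument is essentially formal, but the point that deserves care is this: for $p<q$ the two chains \emph{as literally displayed} only produce $S_{pp}^rB(\Q^2)\hookrightarrow S_{pq}^rF(\Q^2)$ on the left of the first chain, which would give the suboptimal power $n^{1/p}\ge n^{1/q}$; the sharp $n^{1/q}$ is recovered only by first collapsing to the diagonal space $S_{qq}^rF(\Q^2)=S_{qq}^rB(\Q^2)$ via the bounded-domain $L_p$-monotonicity, which is the single place where it matters that the underlying domain is the unit cube and not $\R^2$. (Alternatively, one could bypass embeddings entirely, prove the Haar characterisation of $S_{pq}^rF(\Q^2)$ along the lines of Theorem \ref{thm_besov_char}, and rerun the coefficient estimate from the proof of Theorem \ref{thm_hammersley_disc} on the Haar coefficients of Proposition \ref{prp_haar_coeff}, now with the $L_p$-norm outside the $\ell_q$-sum over frequencies --- but that is strictly more work.) Apart from this and the routine endpoint bookkeeping, there is no real obstacle.
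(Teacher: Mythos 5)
Your argument is correct and is essentially the paper's own derivation: the paper justifies this theorem by exactly the route you describe, namely sandwiching $S_{pq}^r F$ between Besov spaces sharing the fine index $q$ via the quoted embedding chains and then invoking \eqref{triebel_res} (with integrability $\min(p,q)$) for the lower bound and Theorem \ref{thm_hammersley_disc} (with integrability $\max(p,q)$, whose hypothesis $0\le r<\tfrac{1}{\max(p,q)}$ is precisely the one assumed) for the upper bound. Your remark on the case $p<q$ is well taken: with the second chain as literally printed (ending in a $B$-space) the displayed embeddings alone would only yield the suboptimal power $n^{1/p}$, so the sharp $n^{1/q}$ indeed requires the bounded-domain monotonicity $S^r_{\max(p,q),q}F(\Q^2)\hookrightarrow S^r_{pq}F(\Q^2)$ that you supply (equivalently, reading the right-hand space of that chain as an $F$-space), while the remaining corner constellation $r=0$, $\min(p,q)=1$ in the lower bound is a boundary issue inherited from the statement of \eqref{triebel_res} itself rather than a defect of your argument.
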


The spaces $S_p^r H(\Q^d) := S_{p2}^r F(\Q^d)$ are called Sobolev spaces with dominating mixed smoothness. It is well known that $S_p^r H(\Q^d) = L_p(\Q^d)$. We can conclude the following.

\begin{thm} \label{thm_hammersley_disc_h}

Let $1 \leq p \leq \infty$ and $0 \leq r < \frac{1}{\max(p,2)}$. Then for any integer $b\geq 2$ there are constants $c_1, c_2 > 0$ such that, for any $N \geq 2$, the discrepancy function of any point set $\mathcal{P}$ in $\Q^d$ with $N$ points satisfies
\[ \left\| D_{\mathcal{P}} | S_p^r H(\Q^d) \right\| \geq c_1 \, N^{r-1} (\log N)^{\frac{d-1}{q}}, \]
and, for any $n \in \N$ and any generalized Hammersley type point set $\Rn$ with $a_n$ satisfying $|2a_n - n| \leq c_0$ for some constant $c_0 > 0$, we have
\[ \left\| D_{\Rn} | S_p^r H(\Q^2) \right\| \leq c_2 \, b^{n(r-1)} \, n^{\frac{1}{q}}. \]

\end{thm}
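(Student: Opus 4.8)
The plan is to read off Theorem \ref{thm_hammersley_disc_h} as the special case $q = 2$ of Theorem \ref{thm_hammersley_disc_f}, using the defining identity $S_p^r H(\Q^d) = S_{p2}^r F(\Q^d)$. Essentially nothing new needs to be proved; the one point deserving attention is the bookkeeping of the parameter ranges.

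First I would observe that, on setting $q = 2$, the admissibility condition $0 \leq r < \frac{1}{\max(p,q)}$ of Theorem \ref{thm_hammersley_disc_f} becomes exactly $0 \leq r < \frac{1}{\max(p,2)}$, which is precisely the hypothesis of the present theorem. Hence every conclusion of Theorem \ref{thm_hammersley_disc_f} is available here with $q = 2$. For the lower bound I would apply its first assertion to an arbitrary point set $\mathcal{P}$ in $\Q^d$ with $N$ points and rewrite $S_{p2}^r F(\Q^d)$ as $S_p^r H(\Q^d)$, obtaining
\[ \left\| D_{\mathcal{P}} | S_p^r H(\Q^d) \right\| \geq c_1\, N^{r-1} (\log N)^{\frac{d-1}{2}}. \]
For the upper bound I would apply its second assertion to a generalized Hammersley type point set $\Rn$ with $|2a_n - n| \leq c_0$, again identifying $S_{p2}^r F(\Q^2)$ with $S_p^r H(\Q^2)$, to get
\[ \left\| D_{\Rn} | S_p^r H(\Q^2) \right\| \leq c_2\, b^{n(r-1)}\, n^{\frac{1}{2}}, \]
which is the claimed estimate (recall $N = b^n$, so that $\log N$ and $n$ differ only by the constant factor $\log b$).

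I do not expect any genuine obstacle here: the argument is a one-line specialization of an already-proved result. The only thing one has to make sure of is that the derivation of Theorem \ref{thm_hammersley_disc_f} remains valid on the full range $0 \leq r < \frac{1}{\max(p,q)}$ when $q = 2$ is substituted, so that no extra restriction on $r$ creeps in; this is indeed the case, since the underlying embeddings between $S_{pq}^r F$ and the $B$-spaces together with Theorem \ref{thm_hammersley_disc} cover exactly that range.
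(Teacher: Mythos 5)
Your proposal is correct and matches the paper's intent exactly: the paper derives Theorem \ref{thm_hammersley_disc_h} precisely by specializing Theorem \ref{thm_hammersley_disc_f} to $q=2$ via the identification $S_p^r H(\Q^d) = S_{p2}^r F(\Q^d)$, which is also what you do (and you rightly read the residual $q$'s in the statement as $q=2$).
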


\addcontentsline{toc}{chapter}{References}

\end{document}